\numberwithin{equation}{section}
\newtheorem{thmm}{Theorem}[section]
\newtheorem{thm}{Theorem}[section]
\newtheorem{cor}[thm]{Corollary}
\newtheorem{prop}[thm]{Proposition}
\newtheorem{defin}{Definition}[section]
\newcommand\cA{{\mathcal A}}
\newcommand\cE{{\mathcal E}}
\newcommand\cF{{\mathcal F}}
\newcommand\cG{{\mathcal G}}
\newcommand\cH{{\mathcal H}}
\newcommand\cL{{\mathcal L}}
\newcommand\cS{{\mathcal S}}
\newcommand\bN{{\mathbb N}}
\newcommand\bR{{\mathbb R}}
\newcommand\eps{\epsilon}
\def\si{{\sigma}}
\def\la{{\lambda}}
\def\La{{\Lambda}}
\def\ga{{\gamma}}
\newcommand\nn{\nonumber}
 \newcommand{\dis}{\displaystyle}
 \newcommand{\und}{\underline}
\begin{document}

\title[Quasi-Static Limits]{Quasi-static hydrodynamic limits}

\author{Anna de Masi}
\address{Anna de Masi\\Universit\`a dell'Aquila,\\
 67100 L'Aquila, Italy}
\email{{\tt demasi@univaq.it}}

\author{Stefano Olla}
\address{Stefano Olla\\
CEREMADE, UMR CNRS 7534\\
Universit\'e Paris-Dauphine\\
75775 Paris-Cedex 16, France}
\email{{\tt olla@ceremade.dauphine.fr}}

\date{\today. {\bf File: {\jobname}.tex.}}
\begin{abstract}
We consider hydrodynamic limits of interacting particles systems with
open boundaries, where the exterior parameters change in a time scale
slower than the typical relaxation time scale. The limit
\emph{deterministic} profiles evolve quasi-statically. {
These limits
define rigorously the thermodynamic quasi static
transformations also for transition between non-equilibrium stationary
states}. We study first the case of the symmetric simple exclusion,
where duality can be used, and then we use relative entropy methods to
extend to other models like zero range systems. Finally we consider a
chain of anharmonic oscillators in contact with a thermal Langevin
bath with a temperature gradient and a slowly varying tension applied
to one end.  

 \end{abstract}
\thanks{We thank Lorenzo Bertini, Claudio Landim, Giovanni Jona
  Lasinio and Errico 
  Presutti for {many very helpful discussions}.
We thank the kind hospitality of GSSI, AdM thanks also the Institute
H. Poincare, where part of this work has been done. 
The work of S.O. has been partially supported by the European
Advanced Grant Macroscopic Laws and Dynamical Systems (MALADY) (ERC
AdG 246953)} 
\keywords{Quasi-Static thermodynamic, hydrodynamic limits, Clausius equality}
\subjclass[2000]{...}
\maketitle

\section{Introduction}
\label{sec:introduction}

Quasi-static transformations are defined in thermodynamic literature
as those transformations where the external conditions change so
slowly that at any moment the system is \emph{approximately} in
equilibrium. They are usually presented as idealization of 
\emph{real} thermodynamic transformations \cite{callen}. 
On the other hand they are necessary, in the traditional approach, in
order to construct thermodynamic potentials, for example to define 
{thermodynamic} entropy from Carnot cycles.  

Equilibrium statistical mechanics succesfully describes the
equilibrium thermodynamics states from the microscopic dynamical
properties of the     
system, in particular the corresponding stationary probability
distributions, given explicitely by the Gibbs measures.   
Let us consider the simple situation when the internal dynamics of the 
system has a certain number of conserved quantities (energy,
density,...), and the external conditions (temperature of the heat
bath, pressure of the external forces,...) select the stationary
equilibrium probability distribution for the microscopic dynamics. The
fact that, when the microscopic system is \emph{large}, there are only
few of such conserved quantities, and correspondingly few
external parameters will determine the equilibrium state, is related to
the ergodicity of the microscopic dynamics. 

The system can be driven out of equilibrium by changing the exterior
parameters. If this is done abruptly, eventually the system will relax
to a new equilibrium state corresponding to the new values of
temperature, pressure etc.. {
The typical relaxation time to equilibrium may depend on the size of
the microscopic system}.

The situation is similar but  more complex when the external agents
are space 
inhomogeneous, for example when the system is submitted to gradients of
temperature or pressure. In the stationary situation fluxes of the
conserved quantities can cross the system. The corresponding
probability stationary distributions for the microscopic dynamics are
called \emph{non-equilibrium stationary states}. These are not
explicitly computable as in the equilibrium case, but are locally
close to the equilibrium ones, when the size of the system is very large. 

In the present article we want to obtain, at least for some systems,
in a rigorous way the quasi-static thermodynamic transformations from
the microscopic dynamics, through a space-time scale limit. In systems
of size $N$ 
with diffusive behavior, where the typical time scale for reaching
equilibrium is of order $N^2$, we look at the evolution of the
conserved quantities at the time scale $N^{2+\alpha}t$, for a $\alpha>0$,
and we change  the exterior conditions smoothly at this time scale
(i.e. very slowly with respect to the typical relaxation time). We
obtain that in this time scale, at each instant $t$, the profile of the
conserved quantities converges to the deterministic profiles
corresponding to the stationary state for the boundary condition given
at time $t$. This means that the profiles evolve quasi-statically,
following 
the rules of the thermodynamic quasi-static transformations.
Consequently the \emph{paradox} of the quasi-static transformations
disappears when these transformations are understood (also in a
mathematically precise way) in a proper macroscopic space-time scale.   
Remarkably we also obtain quasi-static transformations
between \emph{non-equilibrium stationary states}, that could be the
base for constructing a thermodynamics of non-equilibrium.    

We consider three different models, in order to illustrate the
generality of the idea. 

We consider first the symmetric simple exclusion in an interval of
size $N$, i.e. particles performing symmetric random walks with
exclusion rule, with density reservoirs at the boundaries. These
reservoirs are realized with random creation and annhiliation of
particles {so that the
average density of particles  at the boundaries} will be respectively
$\rho_-(t)$ on the left and  $\rho_+(t)$ on the right. The rate
of jumps or of creations/annhiliation is  $N^{2+\alpha}$, {$\alpha$ being any positive number}. We prove that 
at every time $t$ the empirical  {particle's} density 
converges, as
$N\to\infty$, to the linear interpolation between $\rho_-(t)$ and
$\rho_+(t)$, i.e. to the solution of the Laplace equation $\partial_{xx}
\rho = 0$ with the given boundary conditions. {We actually prove the strongest result of  the local equilibrium property.


We first prove  the above convergence in section \ref{sec:simple-excl-with}
   by using duality techniques as in \cite{GKMP},  where analogous results were proved in the case of $\rho_\pm$ fixed constants.
With the duality techniques we prove local equilibrium in the strong form of Theorem \ref{thm1} and the same techniques allow to study the limit of
the covariance function. In fact we prove in Theorem \ref{a3.2}  that the covariance behaves at each
time $t$ as in the corresponding stationary states}, as first
computed by Spohn \cite{Spohn}.

{In Section \ref{sec:entropy-method}, see Theorem \ref{leex},
we  prove a weaker version of the local equilibrium by using} entropy methods as in \cite{GPV} and \cite{KOV1989}
 that
are suitable to be used for more general
models.   {In particular the proof of Theorem \ref{leex} can be adapted to prove the analogous statement for all gradient conservative dynamics with creations/annihilations at the boundaries as the ones studied in \cite{els1} in the case of constants $\rho_\pm$.}

{In Section \ref{sec:zero-range} we} consider the zero-range process,
where more particles per site 
are allowed, and jump symmetrically with rate depending on the number
of particles in the site they occupy. A chemical potential $\lambda$
characterizes the equilibrium measures, which is the expectation of the
jump rate, and in general is a non-linear function of the density. At
the boundaries the reservoirs are given again by creations/annihilations
of particles corresponding to two possibly different values of
$\lambda_{\pm}(t)$ at time $t$. {Jumps and creations/annihilations
happen at rate proportional to $N^{2+\alpha}$}, and  we prove that at
each macroscopic 
time $t$ (in a weak sense) the empirical density of the particles converges
to the solution of the non-linear Laplace equation
\begin{equation}
  \label{eq:macro0r}
  \partial_{xx} \lambda(\rho(x,t)) = 0, \qquad \lambda(\rho(\pm 1,t))
  = \lambda_{\pm}(t). 
\end{equation}

Finally, {in Section \ref{sec:damp-anharm-chain}}, we consider a chain of (unpinned) anharmonic oscillators in contact with 
a heat bath with a gradient of temperature and subject to a force
(tension) $\bar\tau(t)$ acting on the last right particle, that changes at
the {macroscopic 
time scale}. The left end of the chain is
attached to a point and the heat bath is modeled by Langevin
thermostats acting on each particles. The temperatures of the
Langevin thermostats may depend on the atom and change slowly
form one atom to the neighbor one, giving rise to a smooth
macroscopic profile of temperature.  When temperature profile
$\beta^{-1}(y)$  is constant in space and time at a value $\beta^{-1}$
and tension is constant $\bar\tau(t) = \tau$, the stationary
(equilibrium) distribution is given 
by the canonical Gibbs measure with these values of temperaure and
tension. In equilibrium the tension $\tau$ is equal to the expectation
value of the anharmonic force between particles.

For a non-constant profile of temperature we have a
non-equilibrium stationary state where energy flows from hot to cold
thermostats. We consider only situations where the profile of
temperature does not change in time, and only the applied tension is
changing {
 in the macroscopic 
 time scale}. Again the dynamics is speeded up in time by $N^{2+\alpha}$. 
We prove that the empirical strain of volume converge to the solution
of the equation 
\begin{equation}
  \label{eq:macrossillatori}
  \begin{split}
    \tau(r(x,t), \beta(x)) = \bar\tau(t)
  \end{split}
\end{equation}
where $\tau(r,\beta)$ is the equilibrium tension corresponding to the
volume (lenght) $r$ and temperature $\beta^{-1}$, i.e. $\tau
= \partial_r \mathcal F(r,\beta)$, where  $\mathcal F(r,\beta)$ is the
thermodynamic free energy. We have obtained this way the quasi-static
isothermal transformation. 

The main interest in this last model is that we can define the heat
$Q(t)$ as the limit of the the total flux of energy between the system
and the thermostats, divided by the number of particles, and it turns
out  that this is a \textbf{deterministic} function of time that satisfy
\begin{equation}
  \label{eq:1stintro}
  U(t) - U(0) = Q(t) + \mathcal W(t)
\end{equation}
where $U(t)$ is the limit of the internal energy (per particle) and
$\mathcal W(t)$ is the work done by the exterior force. In the context
of the equilibrium case (constant temperature profile), this is the
first principle of thermodynamics for isothermal quasi-static
transformations. For non-constant temperature profile $Q(t)$ is
usually called \emph{excess heat}.  
Also this quasi-static transformation satisfies Clausius identity,
i.e. the work $\mathcal W(t)$ is equal to the time difference of the
free energy after properly identifying the macroscopic free energy as
integral over space of the equilibrium one.  

Quasi-static limits where previously considered on the macroscopic
hydrodynamic diffusive equation, rescaling the boundaries parameters
after having performed the hydrodynamic limits (cf. \cite{bgjll13},
\cite{olla2014}, \cite{LO2015}), i.e. in a two step approach. The point
of this article is to show that quasi-static transformations can be
obtained in a straightforward way {from the microscopic dynamics by
  considering} 
the proper space-time scaling.

\section{Simple exclusion with boundaries}
\label{sec:simple-excl-with}

We consider the exclusion process in $\{0,1\}^{\La_N}$,
$\La_N:=\{-N,..,N\}$ with reservoirs at the boundaries with density  
$\rho_\pm(t)\in (0,1)$. We assume that $\rho_\pm(t)$ are Lipschitz
continuous. {Presumably our results can be generalized to piecewise
  continuous functions but we did not investigate this case.} 

Denoting by $\eta(x)\in  \{0,1\}$ the occupation number at $x\in
\La_N$ we define the dynamics via the generator 
		\begin{equation}
		\label{0.1}
 L_{N,t}=N^{2+\alpha}[L_{\text{exc}}+L_{b,t}],\qquad t\ge 0,\quad \alpha>0
			\end{equation}
where {
 \begin{equation}
   \label{0.2}
L_{\text{exc}}f(\eta) 
=\frac 12  \sum_{x=-N}^{N-1} \Big(f(\eta^{(x,x+1)})-f(\eta)\Big)=:
\frac 12  \sum_{x=-N}^{N-1}\nabla_{x,x+1}f(\eta)
   \end{equation}
$\eta^{(x,y)}$ is the configuration obtained from $\eta$ by exchanging the occupation numbers at $x$ and $y$}, and  
\begin{eqnarray}
L_{b,t}f(\eta) = \sum_{\sigma = \pm} \rho_{\sigma}(t)^{1-\eta(\sigma N)}
  (1-\rho_{\sigma}(t))^{\eta(\sigma N)}  [f(\eta^{\sigma N})-f(\eta)]
 \label{eq:1}
\end{eqnarray}
where $\eta^x (x) = 1-\eta(x)$, and $\eta^x (y) = \eta(y)$ for $x\neq y$.

\bigskip

	\begin{thm}
	\label{thm1}
For any $\alpha>0$ and  for  any macroscopic time $t> 0$ the following holds. For any initial configuration $\eta_0$, for any $r\in[-1,1]$ and for any local function $\varphi$
	\begin{equation}
	\label{0.5}
\lim_{N\to\infty}\mathbb E_{\eta_0}\Big(
\theta_{[Nr]} \varphi (\eta_{t})\Big)=
 <\varphi(\eta)>_{\bar \rho(r,t)} =: \hat\varphi(\rho(r,t)) 
	\end{equation}
where $[\cdot]$ denotes integer part, $\theta$ is the shift operator, $<\cdot>_{\rho}$ is the expectation with respect to the product Bernoulli measure of density $\rho$, 
and  
\begin{equation}
 \label{eq:2} 
 \bar \rho(r,t)=\frac 12[\rho_+(t)-\rho_-(t)]r +\frac 12\left[\rho_+(t) + \rho_-(t)\right], \qquad r\in[-1,1]
 \end{equation}
is the quasi-static profile of density at time $t$.
	\end{thm}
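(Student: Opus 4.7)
The plan is to adapt the self-duality approach of \cite{GKMP} (which treats constant reservoir densities) and combine it with a freezing argument that exploits the slow time-dependence of $\rho_\pm(t)$. Since $\eta(y)^2=\eta(y)$, every local function $\varphi$ is a polynomial in the occupation variables, so by linearity it is enough to prove, for every $k\ge1$ and every finite tuple of distinct shifts $(y_1,\ldots,y_k)$,
\[
\lim_{N\to\infty}\bE_{\eta_0}\!\Big[\prod_{i=1}^k \eta_t\bigl([Nr]+y_i\bigr)\Big]=\bar\rho(r,t)^k.
\]
A direct computation of $L_{b,t}$ on $\prod_i\eta(x_i)$ yields the self-duality representation
\[
\bE_{\eta_0}\!\Big[\prod_{i=1}^k \eta_t(x_i)\Big]=\bE_{\vec x}\!\Big[\prod_{i\in\cA}\rho_{\sigma_i}\bigl(t-\tau_i\bigr)\prod_{i\notin\cA}\eta_0\bigl(X_i(t)\bigr)\Big],
\]
where $(\vec X(s))_{s\ge0}$ is a system of $k$ symmetric exclusion particles on $\La_N$ with absorption at $\pm N$ whose bulk jumps and boundary absorptions are both sped up by $N^{2+\alpha}$; $\tau_i$ is the macroscopic absorption time of the $i$-th dual particle, $\sigma_i\in\{-,+\}$ is its absorbing endpoint, and $\cA=\cA_t$ is the set of indices absorbed by dual macroscopic time $t$.

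The sped-up scaling makes the dual absorb almost instantaneously on the macroscopic scale: the smallest eigenvalue of the absorbing-boundary generator on $\La_N$ is of order $N^{-2}$, hence of order $N^{\alpha}$ in the scale \eqref{0.1}. A standard exit-time estimate (or a comparison with the single-particle gambler's ruin) then gives
\[
\bP_{\vec x}\bigl(\cA_\delta\ne\{1,\ldots,k\}\bigr)\le Ck\,e^{-cN^{\alpha}\delta}\quad\text{for all }\delta>0,
\]
so the contribution of unabsorbed particles is negligible uniformly in $\eta_0$; this is the mechanism behind the absence of the initial configuration from \eqref{0.5}. The Lipschitz continuity of $\rho_\pm$ further allows the replacement of each $\rho_{\sigma_i}(t-\tau_i)$ by $\rho_{\sigma_i}(t)$ at the cost of an error $O(\delta)$, to be sent to zero after $N\to\infty$.

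After these two reductions, the quantity to control is $\bE_{\vec x}\bigl[\prod_{i=1}^k\rho_{\sigma_i}(t)\bigr]$, i.e.\ the frozen-boundary stationary $k$-point correlation of SSEP with constant reservoir densities $\rho_\pm(t)$, evaluated at the sites $([Nr]+y_i)_i$. This is precisely the object analysed in \cite{GKMP}, where the closed discrete hierarchy satisfied by these correlations is solved and shown to converge macroscopically to $\bar\rho(r,t)^k$ with $\bar\rho$ given by \eqref{eq:2}. Letting $\delta\to0$ then concludes the argument. The main technical obstacle is the asymptotic independence of the absorption endpoints $\sigma_1,\ldots,\sigma_k$: the dual particles start only $O(1)$ apart and genuinely interact through the exclusion constraint, so factorization of their absorption laws is not elementary and requires the delicate analysis of \cite{GKMP}; our contribution essentially reduces to verifying that the Lipschitz time-dependence of the reservoirs perturbs this picture only by $O(\delta)$ errors that vanish in the iterated limit $N\to\infty$ followed by $\delta\to0$.
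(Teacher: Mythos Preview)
Your proposal is correct and follows essentially the same strategy as the paper: reduce to products $\prod_i\eta(x_i)$, represent these via self-duality with absorbed stirring walkers, use the fast absorption estimate $\mathcal P_x(\tau>s)\le ce^{-c's/N^2}$ to kill the dependence on $\eta_0$, and use Lipschitz continuity of $\rho_\pm$ to freeze the boundary values at time $t$. The paper carries this out as Propositions~\ref{prop1.1}--\ref{prop1.2} and Corollaries~\ref{cor1}--\ref{cor2}.

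The only substantive difference is in how the factorization of the absorption endpoints $\sigma_1,\dots,\sigma_k$ is established. You defer this to \cite{GKMP}; the paper instead proves it directly (Corollary~\ref{cor2}) by invoking the coupling of \cite{anna-errico91} between $k$ stirring particles and $k$ independent random walks, which keeps $|x_i(s)-x_i^0(s)|\le t^{1/4+\gamma}$ with high probability and hence forces the stirring and independent particles to exit on the same side except on an event of probability $O(N^{-1/2+\alpha/2})$. This makes the argument self-contained modulo the coupling estimate, whereas your route requires checking that the relevant statement in \cite{GKMP} is stated for dual particles started $O(1)$ apart (it is, but one has to locate it). Either route works.
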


We prove Theorem \ref{thm1} by using the self-duality of the exclusion
that can be adapted to our process as we explain in Propositions
\ref{prop1.1} and \ref{prop1.2} below. The dual process is defined in
terms {of absorbed stirring walks  
as explained in Definition \ref{def1} below.} 

\medskip
In the rest of this section we use the equivalent definition of the
dynamics obtained by considering the generator 
	\begin{equation}\label{0.3}
 \hat L_{N,s}:=L_{\text{exc}}+L_{b,N^{-2-\alpha}s}
	\end{equation}
and then studying the process up to times of order $N^{2+\alpha}t$, $t\ge 0$ being the macroscopic time.

	\begin{defin} {The absorbed walkers}.
	\label{def1}

	\noindent
	One absorbed random walk is the Markov process in \newline
        $\La_{N+1}:=[-N-1,N+1]\cap \mathbb Z$ that after an
        exponential time of mean 1 jumps with equal probability on its
        nearest neighbor sites and when it reaches $\pm(N+1)$ it
        stays there forever.  We denote by $\{x_t, t\ge 0\}$ a
        trajectory and we denote by $\tau$ the absorbing time: 
	\begin{equation}
	\label{tau}
\tau=\min\{\tau_{-(N+1)},\tau_{N+1}\},\qquad \tau_a=\inf\{t\ge 0: x_t=a\}
	\end{equation}

For any positive integer $n$ we also consider the process
$\{(x^{(1)}_t,.,x^{(n)}_t)$, $t\ge 0\}$ of $n$ {stirring
  walks absorbed at  $\pm(N+1)$. The stirring process is defined by
  exchanging independently at rate $\frac 12$ the ``content" in $x$
  and $x+1$, so that if $x^{(i)}_t=x$ before the exchange then  it
  moves to $x+1$ while if  $x^{(j)}_t=x+1$ then it moves to $x$ after
  the exchange (see \eqref{genS} below for the generator of the
  stirring process). 
 If one disregards labels this is the exclusion process.} 
We denote by $\{x^{(i)}_s, s\ge 0\}$, $i=1,.,n$ the trajectory of particle $i$ starting from $x^{(i)}_0=x_i$ and call $\tau_{i}$ its absorption time. We call $\mathcal L$ the generator,
$\mathcal P_{\und x}$ the law and  $\mathcal E_{\und x}$ the expectation  of this process starting from $\und x=(x_1,.,x_n)$.

 		\end{defin}
%

 	\begin{prop}
	\label{prop1.1}
Let $\eta_0$ be any initial configuration. Then for all $x\in\La_N$
	\begin{equation}
\mathbb E_{\eta_0}[\eta_t(x)]=\mathcal E_x\big[\eta_0(x_{t})\mathbf 1_{\tau>t}\big]+
\mathcal E_x\big[\mathbf 1_{\tau\le t}\,\,u_{x_{\tau}}(t-\tau)\big]
	\label{1.0}
	\end{equation}
where
\begin{equation}
	\label{1.5}
u_{\pm(N+1)}(s)=\rho_\pm(N^{-(2+\alpha)}s)
	\end{equation}
	\end{prop}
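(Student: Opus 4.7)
The plan is a standard duality/graphical argument, adapted to handle the time-inhomogeneous reservoir rates. The idea is to realize the dynamics of $\hat L_{N,s}$ and the absorbed-walker process of Definition~\ref{def1} on a common probability space via Poisson clocks, then trace the history of the occupation at $x$ backward in time.

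Concretely, one sets up a Harris-type graphical construction: attach independent rate-$\tfrac12$ Poisson clocks to each interior bond $(y,y+1)$, $-N\le y\le N-1$, each firing triggering a stirring exchange $\eta(y)\leftrightarrow\eta(y+1)$; attach independent Poisson clocks to the boundary sites $\sigma N$, $\sigma=\pm$, with rate equal to the total flip rate $\rho_\sigma(N^{-2-\alpha}s)+(1-\rho_\sigma(N^{-2-\alpha}s))=1$ of $L_{b,N^{-2-\alpha}s}$; at each such ring at time $s$, sample an independent Bernoulli with parameter $u_{\sigma(N+1)}(s)=\rho_\sigma(N^{-2-\alpha}s)$ and overwrite $\eta(\sigma N)$ with it. A direct generator check confirms that this time-inhomogeneous Markov chain is the one driven by $\hat L_{N,s}$.

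Fix $x\in\La_N$ and $t>0$, and trace $\eta_t(x)$ backward through these clocks: reading each stirring clock in reverse defines a walker $X_{s'}$, $s'\in[0,t]$, with $X_0=x$ that moves to the other endpoint of the adjacent bond at each ring, and is absorbed --- set $X_{s'}:=\sigma(N+1)$ for $s'\ge\tau$ --- as soon as a boundary overwrite-clock fires at the currently occupied site, for the first time at walker-time $\tau$. This process is precisely the absorbed walker of Definition~\ref{def1}. On $\{\tau>t\}$ the traced label was never overwritten, so $\eta_t(x)=\eta_0(X_t)$; on $\{\tau\le t\}$ the label was overwritten at forward time $t-\tau$ by a Bernoulli sample of mean $u_{X_\tau}(t-\tau)$, independent of the walker path and of $\eta_0$.

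Taking expectation under $\mathbb P_{\eta_0}$ --- using independence of the reservoir samples from the walker path --- yields \eqref{1.0} directly. The only delicate point is verifying that the Poisson-clock rates defining the walker match both $L_{b,N^{-2-\alpha}s}$ on the one hand and Definition~\ref{def1} on the other, a routine first-event computation. As an alternative that bypasses the graphical construction, one may observe that $\hat L_{N,s}\eta(x)$ is affine in $\eta$, so $v(x,t):=\mathbb E_{\eta_0}[\eta_t(x)]$ solves a closed linear time-inhomogeneous ODE on $\La_N$ (a discrete heat equation with time-dependent Dirichlet data $u_{\pm(N+1)}(t)$), and so does the right-hand side of \eqref{1.0} via the backward Kolmogorov equation for the walker plus Duhamel's principle; uniqueness of this finite-dimensional linear Cauchy problem then yields equality.
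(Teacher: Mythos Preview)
Your proposal is sound and takes a genuinely different route from the paper. The paper does \emph{not} build a joint graphical construction; instead it couples $\eta_\cdot$ and the walker $x_\cdot$ \emph{independently} under the product law $\mathcal P_x \times \mathbb P_{\eta_0}$, introduces the interpolating functional $\psi(\underline{\varkappa},\underline{\eta},s)$ of \eqref{dual} whose expectations at $s=t$ and $s=0$ are the two sides of \eqref{1.0}, and then shows $\tfrac{d}{ds}E[\psi]=0$. The time-inhomogeneity is handled by first reducing, via a Lipschitz estimate \eqref{1.6}, to the simpler function $\varphi_{a_\pm}$ of \eqref{dual2} with frozen boundary values $a_\pm$, and then performing a direct generator cancellation \eqref{1.9} at $a_\pm=u_\pm(t-s)$. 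Your graphical construction is a dependent coupling that identifies the walker pathwise inside the exclusion dynamics and delivers a pointwise identity before averaging; your ODE/uniqueness alternative is even quicker for $n=1$. Both viewpoints extend to the $n$-point formula of Proposition~\ref{prop1.2}: yours by tracing $n$ labels that interact by stirring, the paper's by conditioning on which labels are already absorbed and iterating the $\varphi$-reduction.

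One caveat worth recording: the ``routine first-event computation'' you defer does not close exactly as written. The total reservoir rate in $L_{b,\cdot}$ at each boundary site equals $1$, so your backward-traced walker is absorbed from $\pm N$ at rate $1$; but the walker of Definition~\ref{def1} jumps from $\pm N$ to $\pm(N+1)$ at rate $\tfrac12$. If you expand both sides of \eqref{1.0} at $x=\pm N$ to first order in $t$ you will see the same factor-of-two discrepancy, so the rate check is not merely cosmetic. This is immaterial for everything downstream (Corollaries~\ref{cor1}--\ref{cor2} and Theorem~\ref{a3.2} only use that $\tau=O(N^2)$ with exponential tails, cf.\ \eqref{1.12}), but you should flag the mismatch rather than assert that the rates agree.
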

\noindent Below we often write $u_{\pm}(s)=u_{\pm(N+1)}(s)$.

{\bf Proof.} Given  $x$ and $t$, a trajectory ${\und {\varkappa}}=\{x_s,
s\in [0,t]\}$ of the absorbed random walk starting at $x$ and a
trajectory $\und \eta=\{\eta_s, s\in [0,t]\}$ starting from $\eta_0$,
we define for all $s\le t$  
			\begin{equation}
	\label{dual}
\psi\big(\und {\varkappa},\und \eta, s\big)=\begin{cases}
 \eta_s(x_{t-s}),& \text{if }\tau>t-s, \\
   u_{x_{\tau}}(t- \tau) & \text{if } \tau\le t-s \\
   \end{cases} 
	\end{equation}
Calling $P=\mathcal P_x\times \mathbb P_{\eta_0}$ the (product) law of 
$({\und {\varkappa}},\und \eta)$ and $E$ the expectation with respect
to $P$, we observe that for $x\in\La_N$ 
	\begin{equation}
	\label{1.3}
E(\psi\big(\und {\varkappa},\und \eta, t\big))=\mathbb
E_{\eta_0}(\eta_t(x)),\qquad E(\psi\big(\und {\varkappa},\und \eta,
0\big))= \text{ right hand side of \eqref{1.0}} 
			\end{equation}
Thus to prove \eqref{1.0} we  show below that
$\frac{d}{ds}E\big[\big(\psi({\und {\varkappa}},\und \eta, s)\big)\big]=0$.   
We first define
for $x\in\La_{N+1}$, $\eta\in \{0,1\}^{\La_N}$ and $a_\pm\in(0,1)$
		\begin{equation}
	\label{dual2}
\varphi_{a_\pm}\big( x,\eta\big)=\begin{cases}
 \eta(x),& \text{if } |x|\le N, \\
   a_\pm & \text{if } x=\pm (N+1) \\
   \end{cases} 
	\end{equation}
and we will prove that 
	\begin{equation}
	\label{1.7}
	\frac{d}{ds}E\big(\psi({\und {\varkappa}},\und \eta,
        s)\big)={\Bigg(\frac{d}{ds} 
	E\big(\varphi_{a_\pm}\big(
        x_{t-s},\eta_{s}\big)\big)\Bigg)}\Big|_{a_\pm=u_\pm(t-s)} 
\end{equation}
Let	$0\le s< s'\le t$ and call
	\begin{eqnarray}
	\nn
	&&D(s,s')=
\psi({\und {\varkappa}},\und \eta, s')-\psi({\und {\varkappa}},\und
           \eta, s),\\
&& \tilde D(s,s')=\varphi_{a_\pm}\big(
   x_{t-s'},\eta_{s'}\big)-\varphi_{a_\pm}\big( x_{t-s},\eta_{s}\big) 
	\end{eqnarray}
First observe that for any choice of $a_\pm$,
\begin{eqnarray}\label{1.8}
 D(s,s')\mathbf 1_{\tau\le t-s'}=0,\qquad \tilde D(s,s')\mathbf 1_{\tau\le t-s'}=0	
 	\end{eqnarray}
If instead $\tau> t-s'$ it is important to choose $a_\pm=u_\pm(t-s)$. In fact, since $u_\pm$ are Lipschitz continuous functions,   we have
	\begin{equation}
	\label{1.6}
\sup_{\si\in[s,s')}|\psi\big({\und {\varkappa}},\und \eta, \si\big)-
\varphi_{u_\pm(t-s)}\big( x_{t-\si},\eta_{\si}\big) |\mathbf
1_{\tau>t-s'}\le C (s'-s)\mathbf 1_{\tau\in[t-s', t-s]} 
	\end{equation}
{Since $P(\tau\in[t-s', t-s])\to 0$ as $s'\downarrow s$, from 
from \eqref{1.8} and \eqref{1.6} we get that 
	\begin{equation*}
\lim_{s'\downarrow s}\frac {E(D(s,s'))}{s'-s}=\Bigg(\lim_{s'\downarrow s}\frac {E(\tilde D(s,s'))}{s'-s})\Bigg)\Big|_{a_\pm=u_\pm(t-s)}
	\end{equation*}
}
Thus the Proposition will follows from the fact that the derivative of $E(\varphi_{a_\pm})$ is 0. This last derivative is easier to compute because $\varphi_{a_\pm}$  is a function of $(x,\eta)$ unlike $\psi$ that is a function of the whole trajectories $({\und {\varkappa}}, \und \eta)$.  In fact we have
	\begin{eqnarray}
	\nn
&&\hskip-.8cm\frac d{ds} E\big[\varphi_{a_\pm}\big(x_{t-s},\eta_s\big)\big]= - E\big[\mathcal L\varphi_{a_\pm}\big(x_{t-s},\eta_s\big)\big]
+E\big[ \hat L_{N,s} \varphi_{a_\pm}\big(x_{t-s},\eta_s\big)\big]
\\&&=-[a_+-E\big(\eta_s(N)\mathbf
     1_{x_{t-s}=N}\big)]+[u_+(t-s)-E\big(\eta_s(N)\mathbf
     1_{x_{t-s}=N}\big)]\nn 
\\&&-[a_--E\big(\eta_s(-N)\mathbf
     1_{x_{t-s}=-N}\big)]+[u_-(t-s)-E\big(\eta_s(-N)\mathbf
     1_{x_{t-s}=-N}\big)] 
\nn \\\label{1.9}
	\end{eqnarray}
because the contribution of the generator $L_{\rm exc}$ cancels with the jumps inside $[-N,N]$ of the random walk.  Thus $\frac d{ds} E\big[\varphi_{a_\pm}\big(x_{t-s},\eta_s\big)\big]=0$ by putting in \eqref{1.9}  $a_\pm=u_\pm(t-s)$ in agreement with \eqref{1.7}. \qed
	
	\begin{cor}
	\label{cor1}
For any $\alpha>0$, any initial configuration $\eta_0$, any $t>0$ and any $r\in[-1,1]$
	\begin{equation}
	\label{1.11}
\lim_{N\to\infty}\,\,
\mathbb E_{\eta_0}\big[\eta_{N^{2+\alpha}t}([Nr])\big] \ =\ \bar\rho(r,t)
	\end{equation}
	\end{cor}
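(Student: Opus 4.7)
The plan is to apply Proposition \ref{prop1.1} at the speeded-up time $s=N^{2+\alpha}t$ and analyze each of the two resulting terms separately, exploiting the fact that the absorption time $\tau$ of a simple random walk on $\Lambda_{N+1}$ is of diffusive order $N^2$, which is much smaller than the boundary time-change scale $N^{2+\alpha}$.

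First, I would show that the ``not-yet-absorbed'' term is negligible. Since $\{x_s\}$ is a continuous-time simple random walk on $\Lambda_{N+1}$ absorbed at $\pm(N+1)$, the rescaled process $N^{-1} x_{N^2 s}$ converges to a Brownian motion on $[-1,1]$ absorbed at the endpoints, and its absorption time (starting from $r$) is almost surely finite. Hence
\begin{equation*}
\mathcal P_{[Nr]}\bigl(\tau > N^{2+\alpha}t\bigr) \;=\; \mathcal P_{[Nr]}\bigl(N^{-2}\tau > N^{\alpha}t\bigr) \;\longrightarrow\; 0,
\end{equation*}
and the first term in \eqref{1.0} is bounded in absolute value by this probability, hence vanishes.

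Next, for the second term, I rewrite $u_\pm(N^{2+\alpha}t-\tau) = \rho_\pm(t - N^{-(2+\alpha)}\tau)$. On the event $\{\tau \le N^{2+\alpha}t\}$ we have $N^{-(2+\alpha)}\tau \le t$, but much more is true: with probability tending to $1$ we have $\tau \le N^{2+\alpha/2}$, say, so $N^{-(2+\alpha)}\tau \le N^{-\alpha/2} \to 0$. By the Lipschitz continuity of $\rho_\pm$ we may therefore replace $u_\pm(N^{2+\alpha}t-\tau)$ by $\rho_\pm(t)$ up to an error of order $N^{-\alpha/2}$ on this high probability event, while on the complementary event both quantities are bounded by $1$ and the event has vanishing probability. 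Hence
\begin{equation*}
\mathcal E_{[Nr]}\bigl[\mathbf 1_{\tau\le N^{2+\alpha}t}\,u_{x_{\tau}}(N^{2+\alpha}t-\tau)\bigr] \;=\; \rho_+(t)\,\mathcal P_{[Nr]}(x_\tau=N{+}1) + \rho_-(t)\,\mathcal P_{[Nr]}(x_\tau=-N{-}1) + o(1).
\end{equation*}

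Finally, the classical gambler's ruin identity for the simple symmetric random walk gives $\mathcal P_{x}(x_\tau=N{+}1) = (x+N+1)/(2(N+1))$, which at $x=[Nr]$ converges to $(1+r)/2$; similarly $\mathcal P_{[Nr]}(x_\tau=-N{-}1)\to (1-r)/2$. Substituting back yields
\begin{equation*}
\lim_{N\to\infty}\mathbb E_{\eta_0}\bigl[\eta_{N^{2+\alpha}t}([Nr])\bigr] \;=\; \rho_+(t)\,\tfrac{1+r}{2} + \rho_-(t)\,\tfrac{1-r}{2} \;=\; \bar\rho(r,t),
\end{equation*}
as required. The main point of the argument is the separation of scales $\tau = O(N^2) \ll N^{2+\alpha}$, which freezes the boundary values at their time-$t$ values; once this is established the rest is gambler's ruin. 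No real obstacle arises, though some care is needed to make the Lipschitz replacement uniform in the (random) absorption time.
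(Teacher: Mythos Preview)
Your proposal is correct and follows essentially the same route as the paper: bound the not-yet-absorbed term by $\mathcal P_{[Nr]}(\tau>N^{2+\alpha}t)\to 0$, use Lipschitz continuity of $\rho_\pm$ together with $\tau\le N^{2+\alpha/2}$ w.h.p.\ to freeze the boundary values at time $t$, and finish with the gambler's ruin identity. The only cosmetic difference is that the paper invokes the explicit exponential bound \eqref{1.12} for $\mathcal P_x(\tau>s)$ rather than your softer Brownian-motion scaling argument; this gives quantitative error terms but is not needed for the statement as written.
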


	
\noindent{\bf Proof.}  We use \eqref{1.0}. First observe that for any $x\in\La_N$
	\begin{equation}
	\label{1.12}
\mathcal P_x(\tau>s)\le c e^{-c' s/N^2}
	\end{equation}
thus the first term on the right hand side of \eqref{1.0} converges to 0 exponentially. Call 		\begin{equation*}
F_{N,x}(s)=\mathcal P_x\big[\tau_{N+1}\le s, \tau_{-N-1}>\tau_{N+1}\big]
\end{equation*}
 then   by \eqref{1.12}
	\begin{eqnarray}
	\nn
&&\hskip-.6cm
\big|\mathcal E_x\big[\mathbf 1_{\tau=\tau_{N+1}\le N^{2+\alpha}t}\,\,u_{N+1}(N^{2+\alpha}t-\tau)\big]-\int_0^{N^{2+\frac\alpha 2}t}
\,u_{N+1}(N^{2+\alpha}t-s)\,dF_{N,x}(s)
\big|
\\&&\hskip6cm \nn\le
{ c e^{- c' N^{\alpha/2}}}
\end{eqnarray}
Recalling \eqref{1.5} we next observe that by Lipschitz continuity
\begin{eqnarray}
	\nn
&&\nn
\int_0^{N^{2+\alpha/2}t}
\,\Big|u_{N+1}(N^{2+\alpha}t-s)-\rho_+(t)\Big|dF_{N,x}(s)\le c\frac {tN^{2+\alpha/2}}{N^{2+\alpha}}\le c\frac t{N^{\alpha/2}}
	\end{eqnarray}
	Finally, by \eqref{1.12},
	\begin{eqnarray}
	\nn|F_{N,x}( N^{2+ \alpha/2}t)- \mathcal P_x\big[\tau_{-(N+1)}>\tau_{N+1}\big]\Big|\le 
\mathcal P_x\big[\tau_{N+1}>N^{2+ \alpha/2}t\big]\le ce^{-c'N^{\alpha/2}}
	\end{eqnarray}
and analogously for the term with $u_{-(N+1)}$. Finally observe that
	\begin{eqnarray*}
&& \mathcal P_x(\tau_{N+1}\le\tau_{-(N+1)})=\frac 12+\frac x{2(N+1)}
   \end{eqnarray*}
   \qed

\bigskip
We now generalize \eqref{1.0} by writing a duality formula for the correlation functions. We thus consider $n>1$ distinct points $x_1$,..,$x_n$  in $\La_N$ and recalling Definition \ref{def1} we prove the following Proposition.

%
%
%
	\begin{prop}
	\label{prop1.2}
For any positive integer $n$, any $x_1$,..,$x_n$ distinct points in $\La_N$ and for any $\eta_0$ we have that for all $t\ge 0$ (recall \eqref{1.5})
	\begin{equation}
	\label{1.14}
\mathbb E_{\eta_0}\big[\prod_{i=1}^n\eta_t(x_i)\big]=\mathcal E_{\und x}\Big[\prod_{i=1}^n\big[\mathbf 1_{\tau_i>t}\,\,\eta_0(x^{(i)}_t)+\mathbf 1_{\tau_i\le t}\,\,u_{x^{(i)}_{\tau_i}}(t-\tau_i)\big]\Big]
	\end{equation}

	\end{prop}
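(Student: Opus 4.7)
The plan is to generalize the single-particle duality argument of Proposition~\ref{prop1.1} to the product of $n$ observables. First I would introduce the $n$-particle dual functional
\begin{equation*}
\Psi(\und\varkappa, \und\eta, s) = \prod_{i=1}^n \psi_i(\und\varkappa, \und\eta, s),
\end{equation*}
where each $\psi_i$ is given by formula \eqref{dual} applied to the trajectory of the $i$-th stirring walker $\{x^{(i)}_s\}$. Because the walkers start at distinct sites of $\Lambda_N$, each $\tau_i$ is strictly positive almost surely; hence at $s=t$ one has $\Psi = \prod_i \eta_t(x_i)$, while at $s=0$ one recovers the integrand on the right-hand side of \eqref{1.14}. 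Taking expectations under $P=\mathcal{P}_{\und x}\times \mathbb{P}_{\eta_0}$, the proposition reduces to showing that $\frac{d}{ds} E[\Psi(s)] = 0$ for $s \in (0,t)$.

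Mirroring the proof of Proposition~\ref{prop1.1}, I would introduce the auxiliary product
\begin{equation*}
\Phi_{a_\pm}(\und x, \eta) = \prod_{i=1}^n \varphi_{a_\pm}(x_i, \eta),
\end{equation*}
with $\varphi_{a_\pm}$ as in \eqref{dual2}. The Lipschitz-in-$a_\pm$ dependence of $\Phi_{a_\pm}$, combined with the union bound $P(\exists\, i : \tau_i \in [t-s', t-s]) \to 0$ as $s' \downarrow s$, yields, by the same argument as in \eqref{1.8}--\eqref{1.6}, the reduction
\begin{equation*}
\frac{d}{ds} E[\Psi(s)] \;=\; \frac{d}{ds} E\bigl[\Phi_{a_\pm}(\und x_{t-s}, \eta_s)\bigr]\Big|_{a_\pm = u_\pm(t-s)}.
\end{equation*}

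The right-hand side equals $-E[\mathcal{L}\Phi_{a_\pm}] + E[\hat{L}_{N,s}\Phi_{a_\pm}]$, and here the self-duality of the exclusion intervenes: for distinct positions $x_1,\ldots,x_n \in \Lambda_N$, the action of $L_{\mathrm{exc}}$ on $\prod_i \eta(x_i)$ (as a function of $\eta$, with $\und x$ fixed) coincides with the action of the interior part of the stirring generator on the same product (as a function of $\und x$, with $\eta$ fixed). This can be checked bond-by-bond on each edge $(y,y+1)\subset \Lambda_N$: if no walker is found on $\{y,y+1\}$, both sides contribute $0$; if exactly one walker is, the $\eta$-swap in $L_{\mathrm{exc}}$ and the walker's jump produce the same change in the product; if both endpoints are occupied, $L_{\mathrm{exc}}$ swaps two identical factors and stirring merely permutes labels, so the product is unchanged on either side. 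Hence the interior contributions cancel and only boundary terms survive. For each walker $i$ found at $\sigma N$, the contributions from $-\mathcal{L}$ and from $L_{b,N^{-(2+\alpha)}s}$ are, up to the common multiplicative factor $\prod_{j\ne i}\varphi_{a_\pm}(x^{(j)}_{t-s},\eta_s)$, exactly those in \eqref{1.9}, and the choice $a_\pm = u_\pm(t-s)$ kills them term by term; summing over $i$ and $\sigma\in\{\pm\}$ yields zero.

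The main obstacle I anticipate is the careful case analysis when several walkers interact with the same boundary edge at once, in particular when one walker has already been absorbed at $\sigma(N+1)$ while a second active walker sits at the adjacent site $\sigma N$. In such configurations the absorbed walker contributes a constant factor $a_\sigma$ to $\Phi_{a_\pm}$ which factors cleanly out of every generator action, and the cancellation for the remaining active walker reduces to the one-walker calculation already settled in Proposition~\ref{prop1.1}. Once this configuration-by-configuration check is completed, the identity $\frac{d}{ds} E[\Phi_{a_\pm}(\und x_{t-s},\eta_s)]|_{a_\pm = u_\pm(t-s)} = 0$ is established and \eqref{1.14} follows.
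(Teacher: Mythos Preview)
Your overall strategy coincides with the paper's: define the $n$-particle interpolant $\Psi(s)=\prod_i\psi_i(s)$, show $\frac{d}{ds}E[\Psi(s)]=0$, and reduce to a product of $\varphi_{a_\pm}$'s on which the generator computation mimics \eqref{1.9}. However, the reduction you state,
\[
\frac{d}{ds}E[\Psi(s)]=\frac{d}{ds}E\!\left[\Phi_{a_\pm}(\und x_{t-s},\eta_s)\right]\Big|_{a_\pm=u_\pm(t-s)},
\]
does \emph{not} follow from the one-particle argument \eqref{1.8}--\eqref{1.6}. The problem is with walkers absorbed long before $t-s$. If $\tau_1\le t-s'$ and $\tau_2>t-s$ (say $n=2$), then $\Psi(s')-\Psi(s)=u_\pm(t-\tau_1)\bigl[\psi_2(s')-\psi_2(s)\bigr]$ while $\Phi(s')-\Phi(s)=a_\pm\bigl[\varphi_2(s')-\varphi_2(s)\bigr]$. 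The prefactors $u_\pm(t-\tau_1)$ and $a_\pm=u_\pm(t-s)$ differ by $O(1)$ when $\tau_1$ is not close to $t-s$, and the probability of this event does \emph{not} tend to $0$ as $s'\downarrow s$. So $E[D-\tilde D]/(s'-s)$ need not vanish. Your ``obstacle'' paragraph only checks that the constant factor $a_\sigma$ factors through the generators, which is true but irrelevant: the issue is that $\Phi$ carries the wrong constant for already-absorbed walkers, so the equality of derivatives is simply false in the time-inhomogeneous case (it would be fine if $\rho_\pm$ were constant in time).

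The paper's remedy is exactly the missing piece: condition on $\mathcal F_{t-s}$, the $\sigma$-algebra of the walker trajectories up to time $t-s$, and factor out $U(t-s)=\prod_{j\in I}u_{x^{(j)}_{\tau_j}}(t-\tau_j)$ for the set $I$ of walkers already absorbed. Since $U(t-s)$ is $\mathcal F_{t-s}$-measurable and constant on $[s,s']$, it passes outside the difference quotient, and the $\varphi_{u_\pm(t-s)}$-approximation is applied only to the product over the \emph{active} walkers $i\notin I$, for which the analogue of \eqref{1.6} is valid with the union-bound indicator $\mathbf 1_{\exists i\notin I:\tau_i\in[t-s,t-s']}$. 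After this conditioning, your generator cancellation (the bond-by-bond duality and the boundary identity as in \eqref{1.9}) goes through for the reduced product, and the argument closes.
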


\noindent{\bf Proof.} Denoting by $E=\mathcal P_{\und x}\times\mathbb E_{\eta_0}$ and by $\und X=(x^{(1)}_s,.., x_s^{(n)}, s\in[0,t])$, we prove below that
	\begin{equation}
	\label{1.19}
\frac d{ds}E\big[\Psi(\und X,\und\eta,s)\big]=0,
\qquad \Psi(\und X,\und\eta,s)=\prod_{i=1}^n\psi(\und x^{(i)},\und \eta,s)
	\end{equation}
The strategy is to adapt the proof given in  Proposition \ref{prop1.1} for $n=1$.  Let 
$\mathcal F_{t-s}$ be the sigma-algebra generated by $\{(x^{(1)}_{s'},,,x^{(n)}_{s'}), s'\in[0,t-s]\}$. Given $\mathcal F_{t-s}$ we know $k$= number of particles absorbed by time $t-s$ and the times of absorption $\tau_i<t-s$, $i=i_1,..i_k$. Thus calling
	\begin{equation*}
U(t-s):= \prod_{j\in I}u_{x^{(j)}_{\tau_j}}(t-\tau_j)\mathbf 1_{\tau_j\le t-s},\qquad I=\{i_1,..i_k\}
	\end{equation*}
we have that for all $0\le s'<s\le t$,
	\begin{eqnarray*}
&&\hskip-.5cm
E\big[\Psi(\und X,\und\eta,s')-\Psi(\und X,\und\eta,s))\big]
\\&&\hskip1cm
=E\Big[U(t-s)
E\big[\prod_{i\notin I}\mathbf 1_{\tau_i>t-s}(\psi(\und x^{(i)},\und \eta,s')-
\psi(\und x^{(i)},\und \eta,s))\big|\mathcal F_{t-s}\big]\Big]
	\end{eqnarray*}
Recalling \eqref{dual2}, as 
in \eqref{1.6} we have that for all $0\le s'<s\le t$
		\begin{eqnarray*}
&& \hskip-.5cm 
\sup_{\si\in[s',s)}\Big|\prod_{i}\mathbf 1_{\tau_i>t-s}\psi(\und x^{(i)},\und \eta,\si)-\prod_i\mathbf 1_{\tau_i>t-s}\varphi_{u_\pm(t-s)}( x^{(i)}_{t-\si},\eta,\si)
\Big| \\&&\hskip1cm
\le C(s'-s)\mathbf 1_{\exists i :
\tau_i\in[t-s, t-s']}
		\end{eqnarray*}
Thus as in the case $n=1$,  \eqref{1.14} is a consequence of
	\begin{eqnarray*}
&&\lim_{s'\nearrow s}\frac 1{s-s'}E\Big[U(t-s)
E\big[\prod_{i\notin I}\mathbf 1_{\tau_i>t-s}\\&&\hskip1cm
(\varphi_{u_\pm(t-s)}(x^{(i)}(t-s'), \eta_{s'})-
\varphi_{u_\pm(t-s)}(x^{(i)}(t-s), \eta_s))\big|\mathcal F_{t-s}\big]\Big]=0
	\end{eqnarray*}
The above equality can be proved by applying the generators similarly to \eqref{1.9}.\qed
	\begin{cor}
	\label{cor2}
For any initial configuration $\eta_0$, for any $t>0$, any $n>1$ and any distinct points $x_1$,..,$x_n$  in $\La_N$, calling $t_N=N^{2+\alpha}t$
	\begin{equation}
	\label{1.19}
\lim_{N\to\infty}\Big| \mathbb E_{\eta_0}\big[\prod_{i=1}^n\eta_{t_N}(x_i)\big]-
\prod_{i=1}^n	\mathbb E_{\eta_0}\big[\eta_{t_N}(x_i)\big]\Big|=0
	\end{equation}
	\end{cor}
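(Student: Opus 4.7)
The plan is to apply Proposition \ref{prop1.2} to the joint expectation and Proposition \ref{prop1.1} ($n$ times) to each factor of the product, writing both sides of \eqref{1.19} as expectations of the \emph{same} integrand under two different laws: the $n$-stirring law $\mathcal E_{\und x}$ on the left, and the product law $\mathcal E^{\mathrm{ind}}_{\und x}$ of $n$ independent absorbed simple random walks (each starting at $x_i$) on the right. Setting
$$\Phi_N := \prod_{i=1}^n\Big[\mathbf 1_{\tau_i>t_N}\,\eta_0(x^{(i)}_{t_N}) + \mathbf 1_{\tau_i\le t_N}\,u_{x^{(i)}_{\tau_i}}(t_N-\tau_i)\Big],$$
this gives $\mathbb E_{\eta_0}[\prod_i \eta_{t_N}(x_i)] = \mathcal E_{\und x}[\Phi_N]$ and $\prod_i \mathbb E_{\eta_0}[\eta_{t_N}(x_i)] = \mathcal E^{\mathrm{ind}}_{\und x}[\Phi_N]$, so \eqref{1.19} is equivalent to $|\mathcal E_{\und x}[\Phi_N] - \mathcal E^{\mathrm{ind}}_{\und x}[\Phi_N]|\to 0$.

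I would then simplify each factor of $\Phi_N$ by the same time-scale argument used in the proof of Corollary \ref{cor1}. Each labeled stirring walker is marginally a rate-$1$ simple random walk, so the hitting-time bound \eqref{1.12} applies: $\mathcal P(\tau_i > N^{2+\alpha/2}t)\le ce^{-c'N^{\alpha/2}}$ under both $\mathcal E_{\und x}$ and $\mathcal E^{\mathrm{ind}}_{\und x}$. Combined with the Lipschitz continuity of $\rho_\pm$ and the identification $u_\pm(s)=\rho_\pm(N^{-(2+\alpha)}s)$, the factor-by-factor computation used in Corollary \ref{cor1} gives $\mathbb E|\Phi_N - \prod_i \rho_{\sigma_i}(t)|\to 0$ in $L^1$ under both laws, where $\sigma_i\in\{+,-\}$ denotes the absorbing boundary of walker $i$. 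It therefore suffices to prove
$$\mathcal E_{\und x}\Big[\prod_{i=1}^n \rho_{\sigma_i}(t)\Big] - \mathcal E^{\mathrm{ind}}_{\und x}\Big[\prod_{i=1}^n \rho_{\sigma_i}(t)\Big]\longrightarrow 0,$$
and since $\rho_{\sigma_i}(t)=\rho_-(t)+(\rho_+(t)-\rho_-(t))\mathbf 1_{\sigma_i=+}$, by expanding the product this reduces to the asymptotic equality of each joint absorption probability $\mathcal P_{\und x}(\sigma_i=+,\,i\in S)$ with the product of marginals $\prod_{i\in S}\mathcal P_{x_i}(\sigma=+)$, for every $S\subseteq\{1,\dots,n\}$.

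The main obstacle is proving this asymptotic factorization of absorption boundaries. The difficulty is that labeled stirring walkers do interact, through shared Poisson edge clocks whenever they occupy adjacent sites, and such encounters are not rare on the diffusive window $[0,N^2 t]$; so the independence has to be recovered in spite of non-trivial coupling. My approach is a correlation-function estimate: the joint absorption probabilities satisfy a discrete elliptic system on the Weyl alcove $\{(y_1,\dots,y_n)\in\La_{N+1}^n:\,y_i\text{ distinct}\}$, with boundary data supplied by the marginals and with a source supported on the coincidence walls $\{y_i=y_j\pm 1\}$ of relative size $O(1/N)$ (this source is exactly the discrepancy between the stirring swap and two independent crossings). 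A maximum-principle (or Green's-function) bound on the alcove then yields $|\mathcal P_{\und x}(\sigma_i=+,\,i\in S)-\prod_{i\in S}\mathcal P_{x_i}(\sigma=+)|\le C/N$ uniformly in $\und x$, which is more than enough. An alternative route --- joint hydrodynamic scaling of the $n$-tuple of walkers towards $n$ independent Brownian motions on $[-1,1]$ absorbed at $\pm 1$ --- yields the same conclusion when the $x_i$ are macroscopically separated, but the correlation-function method has the advantage of delivering a uniform bound over all admissible $\und x\in\La_N^n$, as required by the statement of the corollary.
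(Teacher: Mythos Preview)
Your reduction is the same as the paper's: via Propositions \ref{prop1.1}--\ref{prop1.2} and the time-scale argument of Corollary \ref{cor1} (using \eqref{1.12} and Lipschitz continuity of $\rho_\pm$), both the joint and the product expectations are approximated by $\mathcal E_{\und x}[\prod_i \rho_{\sigma_i}(t)]$ and $\mathcal E^{\mathrm{ind}}_{\und x}[\prod_i \rho_{\sigma_i}(t)]$, so everything hinges on the asymptotic independence of the absorption sides $(\sigma_1,\dots,\sigma_n)$ under the stirring law.

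Where you diverge from the paper is precisely this last step. The paper does \emph{not} use an elliptic/correlation-function argument here; it invokes the explicit coupling of stirring and independent walks from \cite{anna-errico91} (Proposition 6.6.3), which yields
\[
Q_{\und x}\!\left(\sup_{0<s\le t}|x_i(s)-x_i^0(s)|\le t^{1/4+\gamma},\ \forall i\right)\ge 1-ct^{-k}.
\]
Under this coupling the stirring and independent copies of walker $i$ stay within $N^{1/2+\alpha/4}\ll N$ of each other up to time $N^{2+\alpha/2}$, and a short gambler's-ruin estimate then shows they exit through the same endpoint except on an event of probability $O(N^{-1/2+\alpha/2})$. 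This is more direct than setting up the discrete Dirichlet problem and bounding its Green's function on the alcove. On the other hand, your approach is perfectly sound: the source term $(L^{\mathrm{st}}-L^{\mathrm{ind}})\prod_i p(x_i)$ is indeed supported on $\{|x_i-x_j|=1\}$ and is in fact $O(N^{-2})$ there (not $O(N^{-1})$, since the product of the linear harmonic marginals has second differences of that order), and the resulting bound $|h-g|\le C/N$ is exactly what the paper later extracts for $n=2$ in Theorem \ref{a3.2}. So your route recovers the sharper quantitative $O(1/N)$ decorrelation uniformly in $\und x$, at the cost of a heavier PDE-style argument, whereas the paper's coupling gives a softer but entirely self-contained proof of \eqref{1.19}.
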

	
\noindent{\bf Proof.} By \eqref{1.14} and  \eqref{1.12} it is enough to prove that
	\begin{eqnarray}
	\label{1.20}
&&\lim_{N\to\infty}|D_{\und x,N}|=0 \end{eqnarray}
where
		\begin{eqnarray}
\nn D_{\und x,N}= \mathcal E_{\und x}\big[\prod_{i=1}^n\mathbf 1_{\tau_i\le t_N}\,\,u_{x^{(i)}_{\tau_i}}(t_N-\tau_i)\big]-
\prod_{i=1}^n	\mathcal E_{ x_i}\big[\mathbf 1_{\tau_i\le t_N}\,\,u_{x^{(i)}_{\tau_i}}(t_N-\tau_i)\big]
	\end{eqnarray}

We use the coupling explained in \cite{anna-errico91} (see also \cite{DPTV2}) between $n$ independent random walks and $n$  stirring particles. Both the stirring
and the independent particles move in the whole $\mathbb Z$. 
{The evolution of the $n$ stirring particles is the process with generator $L^{\rm st}$ that acts on functions $f$ on $\mathbb Z^n_{\ne}=\{\und x\in \mathbb Z^n: x_i\ne x_j$ for all $i\ne j\}$ as follows
	\begin{equation}
	\label{genS}
	L^{\rm st}f(\und x)=\frac 12\sum_{i=1}^n \sum_{\si=\pm 1}[f(\und x^{i,\si})-f(\und x)]
	\end{equation}
with $\und x^{i,\si}=\und x+\si e_i-\si e_j$ if there is $j\ne i$ such that $x_j= x_i+\si e_i$, otherwise  $\und x^{i,\si}=\und x+\si e_i$, $e_i$ being the unit vector in the $i$ direction.
We refer to Section 6.6 of \cite{anna-errico91} for the definition of the coupling and we denote by $x^{0}_i(t)$  and $x_i(t)$  the position at time $t$ of the 
independent and respectively interacting particle with label $i$.}
The initial positions are the same $\und x^0(0)=\und x(0)$ and
the law of the coupling is denoted by $Q_{\und x}$ (we denote by
$Q_{\und x}$ also the expectation).  In Proposition 6.6.3 of
\cite{anna-errico91}  it is proved that for any positive $\ga$ and $k$
there is $c$ so that for all $t>0$ 
{
	\begin{equation}
	\label{1.21}
Q_{\und x}\left(\sup_{0<s\le t}|x_i(s)-x_i^0(s)|\le t^{\frac
  14+\ga},\,\,\forall i=1,..,n\right)\ge 1-ct^{-k} 
	\end{equation}}
We call $\tau^0_i=\min(\tau^{0,-}_i,\tau^{0,+}_i)$ with $\tau_i^{0,\pm}$ the first time the independent particle $i$ is at $\pm (N+1)$, 
$\tau_i=\min(\tau_i^+,\tau_i^-)$ 
is the analogue hitting time for the interacting particle $i$. Observe that since the particles are moving by stirring, then the hitting time of interacting particle $i$ is not influenced by the fact that another particle (say $j$) is inside the interval after being outside. Moreover the function inside the expectation in $D_{\und x,N}$ depends only on the hitting times. 	Thus 
		\begin{eqnarray}
	\nn 
&&\hskip-1cm
D_{\und x,N}=Q_{\und x}\Big(\prod_{i=1}^n\big[\mathbf 1_{\tau_i\le t_N}\,\,u_{x_i(\tau_i)}(t_N-\tau_i)\big]
\\&&\hskip2cm  \nn -
\prod_{i=1}^n	\big[\mathbf 1_{\tau^0_i\le t_N}\,\,u_{x^0_i(\tau^0_i)}(t_N-\tau^0_i)\big]\Big)
	\label{1.22}
	\end{eqnarray}
We call $\und a=(a_1,..,a_n)$, $a_i\in\{-1,1\}$ the vector that specifies where the stirring particles exit the interval, namely $a_i=\pm 1$ if stirring particle $i$ exits in $\pm (N+1)$, thus $\tau_i=\tau_i^{a_i}$. Analogously $\und b=(b_1,..,b_n)$, $b_i\in\{-1,1\}$ specifies that $\tau_i^0=\tau^{0,b_i}_i$. Given $\und a$ and $\und b$ we denote by $\tau(\und a)=(\tau_1^{a_1},..,\tau_n^{a_n})$ and by $\tau(\und b)=(\tau_1^{b_1},..,\tau_n^{b_n})$. With this notation we have
		\begin{eqnarray}
		\label{1.23}
&&D_{\und x,N}=\sum_{\und a}\sum_{\und b}Q_{\und x}\big(\phi(\tau(\und a),\tau(\und b))\big)\end{eqnarray}
		\begin{eqnarray*}
&&\hskip-1.4cm
\phi(\tau(\und a),\tau(\und b))=\prod_{i=1}^n\big[\mathbf 1_{\tau_i=\tau_i^{a_i}\le t_N}\,\,u_{a_i}(t_N-\tau_i)\big]-
\\&&\hskip2cm
\prod_{i=1}^n	\big[\mathbf 1_{\tau^0_i=\tau^{0,b_i}_i\le t_N}\,\,u_{b_i}(t_N-\tau^0_i)\big]\nn
	\end{eqnarray*}
	Call
		\begin{eqnarray*}
A_N=\{\tau_i<N^{2+\alpha/2}, \tau^0_i<N^{2+\alpha/2}\text{ for all } i=1,..,n\}
		\end{eqnarray*}
Then by \eqref{1.12} 
		\begin{eqnarray}
		\label{1.24}
Q_{\und x}\big(\phi(\tau(\und a),\tau(\und b))\big)=Q_{\und x}\big(\phi(\tau(\und a),\tau(\und b))\mathbf 1_{A_N}\big)+c'e^{-c N^{\alpha/2}}
		\end{eqnarray}
We now use the Lipschitz continuity of $u_\pm$ 
		\begin{eqnarray*}
&&|Q_{\und x}\big(\phi(\tau(\und a),\tau(\und b))\mathbf 1_{A_N}\big)-Q_{\und x}\big(\Phi(\tau(\und a),\tau(\und b))\mathbf 1_{A_N}\big)|\le c''\frac1 {N^{a/2}}
\\&& \Phi(\tau(\und a),\tau(\und b))=\prod_{i=1}^n u_{a_i}(t_N)\mathbf 1_{\tau_i=\tau_i^{a_i}\le t_N}\,\, -
\prod_{i=1}^n	u_{b_i}(t_N) \mathbf 1_{\tau^0_i=\tau^{0,b_i}_i\le t_N} 
	\nn	\end{eqnarray*}
Observe now that if $\und a=\und b$ then 	$ \Phi(\tau(\und a),\tau(\und b))=0$, thus
	\begin{eqnarray}
	\nn
|Q_{\und x}\big(\Phi(\tau(\und a),\tau(\und b))\mathbf 1_{A_N}\big)\le C \sum_{i=1}^n\sum_{b\in\{-1,1\}}Q_{\und x}\big(\mathbf 1_{\tau_i=\tau_i^{-b}\le t_N}\,\, -
 \mathbf 1_{\tau^0_i=\tau^{0,b}_i\le t_N} \big)\\	\label{1.26}
	\end{eqnarray}
	Call
		\begin{eqnarray*}
B_N=\{|x_i(t)-x^0_i(t)|<N^{\frac 12+\frac{\alpha}4}\text{ for all } t\in[0,N^{2+\alpha/2}]\}
		\end{eqnarray*}
thus by \eqref{1.21} for a suitable $\ga$ and for $k$ {sufficiently large}
	\begin{eqnarray}
\nn
&&\hskip-1cm
\Big|	\sum_{b\in\{-1,1\}}Q_{\und x}\big(\mathbf 1_{\tau_i=\tau_i^{-b}\le t_N}\,\, -
 \mathbf 1_{\tau^0_i=\tau^{0,b}_i\le t_N} \big)\\&&\nn\hskip1cm-
Q_{\und x}\big(\mathbf 1_{B_N}\sum_{b\in\{-1,1\}}\big[ \mathbf 1_{\tau_i=\tau_i^{-b}\le t_N}\,\, - \mathbf 1_{\tau^0_i=\tau^{0,b}_i\le t_N}\big] \big)\Big|
 \le cN^{-k}\\	\label{1.27}
 	\end{eqnarray}
Call $X^\pm_N=N+1\mp N^{\frac 12+\frac{\alpha}2}$ and $Y^\pm_N=-N-1\mp N^{\frac 12+\frac{\alpha}2}$. Call
\begin{equation*}
  \begin{split}
    C_N= \left\{\text{ independent particle $i$ reaches $Y_N^+$ and
        goes to $X_N^-$ before reaching $Y_N^-$}\right\} \\
    \bigcup
    \left\{\text{independent particle $i$ reaches $X_N^-$ and goes to
        $Y_N^-$ before reaching $X_N^+$}\right\}.
  \end{split}
\end{equation*}
 Then in $C^c_N$, the complement set of $C_N$, we have that independent and interacting particles $i$ exit from the same side. Thus {
 	\begin{eqnarray*}
	Q_{\und x}\big(\mathbf 1_{C^c_N}\mathbf 1_{B_N}\sum_{b\in\{-1,1\}}\mathbf 1_{\tau_i=\tau_i^{b}\le t_N}\big)=Q_{\und x}\big(\mathbf 1_{C^c_N}\mathbf 1_{B_N}\sum_{b\in\{-1,1\}} \mathbf 1_{\tau_i=\tau_i^{b}\le t_N} \mathbf 1_{\tau^0_i=\tau^{0,b}_i\le t_N}\big)
		\end{eqnarray*}
so that}
			\begin{eqnarray*}
	Q_{\und x}\big(\mathbf 1_{C^c_N}\mathbf 1_{B_N}\sum_{b\in\{-1,1\}}\big[ \mathbf 1_{\tau_i=\tau_i^{-b}\le t_N}\,\, -
 \mathbf 1_{\tau^0_i=\tau^{0,b}_i\le t_N}\big] \big)	=0
		\end{eqnarray*}
We are thus left with the trajectories in $C_N$ which is an event that involves only the independent particle $i$. Thus
		\begin{eqnarray*}
&&	
\hskip-1.7cm
Q_{\und x}\big(\mathbf 1_{C_N}\mathbf 1_{B_N}\sum_{b\in\{-1,1\}}\big[ \mathbf 1_{\tau_i=\tau_i^{-b}\le t_N}\,\, -
 \mathbf 1_{\tau^0_i=\tau^{0,b}_i\le t_N}\big] \big)	\le Q_{\und x}\big(C_N\big)
\\&&
\le \mathcal P^0_{Y^+_N}(\tau_{X^-_N}>\tau_{Y^-_N})+
 \mathcal P^0_{X^-_N}(\tau_{Y^+_N}>\tau_{X^+_N})\le {c}\,\,\frac
     {N^{\frac 12+\frac\alpha 2}}{2(N+1)} 
		\end{eqnarray*}
		\qed
		
\begin{proof}[ Proof of Theorem \ref{thm1}] Since any cylinder
  function is a linear combination {of functions of the type
    $\prod_x\eta(x)$,} 
\eqref{0.5} is a consequence of \eqref{1.11} and \eqref{1.19}.
\end{proof}

\medskip
We conclude this section, by computing at any macroscopic time the
covariance of the process.  For $x_1\ne x_2\in \La_N$ and $s>0$ we call
		   \begin{equation}
   \label{1.30}
v_N(x_1,x_2;s) := \mathbb
E_{\eta_0}\Big(\prod_{i=1}^2\big[\eta_s(x_i)- \rho_N(x_i,s)\big]
\Big), 
\quad \rho_N(x_i,s)=\mathbb E_{\eta_0} (\eta_s(x_i))
   \end{equation}
We know from Corollary \ref{cor2} that $v_N$ converges to 0 as $N\to \infty$ and in the next Theorem we show that it is of order $N^{-1}$.
		


\begin{thm}
\label{a3.2}
For any initial configuration $\eta_0$, any $r_1\le r_2\in(-1,1)$ and any $t>0$
 we have 
 \begin{equation}
\lim_{N\to\infty}Nv_N\left( [Nr_1],[Nr_2] ;N^{2+\alpha}t\right)
=  - \frac 14 \big[\rho_+(t)-\rho_-(t)\big]^2  r_1(1-r_2)
  \label{a3.3}
   \end{equation}
 Furthermore for any $t>s$ 
    \begin{equation}
\lim_{N\to\infty}N \mathbb
E_{\eta_0}\Big(\big[\eta_{N^{2+\alpha}s}(x)- \rho_N(x,N^{2+\alpha}s)][\eta_{N^{2+\alpha}t}(y)- \rho_N(y,N^{2+\alpha}t)\big]
\Big)=0
  \label{a3.3b}
   \end{equation}
\end{thm}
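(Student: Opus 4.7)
\emph{Equal-time covariance \eqref{a3.3}.} My strategy is to use the duality of Propositions \ref{prop1.1} and \ref{prop1.2} to write $\mathbb{E}_{\eta_0}[\eta_{t_N}(x_1)\eta_{t_N}(x_2)]$ as an expectation over two absorbed stirring walks, and $\rho_N(x_1,t_N)\rho_N(x_2,t_N)$ as an expectation over two independent absorbed walks, where $t_N = N^{2+\alpha}t$. The contributions from $\{\tau_i > t_N\}$ are exponentially small by \eqref{1.12}. Using Lipschitz continuity of $\rho_{\pm}$ together with $\mathcal{P}(\tau > N^{2+\alpha/2}) \le c e^{-c'N^{\alpha/2}}$, I replace $u_{a_i}(t_N-\tau_i)$ by $\rho_{a_i}(t)$ at cost $o(1/N)$. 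This reduces $Nv_N$ to
\[
N\sum_{a_1,a_2\in\{\pm\}} \rho_{a_1}(t)\rho_{a_2}(t)\bigl[\mathcal{P}^{\mathrm{st}}_{(x_1,x_2)}(a_1,a_2)-\mathcal{P}^0_{x_1}(a_1)\mathcal{P}^0_{x_2}(a_2)\bigr]+o(1),
\]
where $\mathcal{P}^{\mathrm{st}}$ and $\mathcal{P}^0$ are the exit-side distributions of the stirring pair and of two independent walks.

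\emph{Leading-order $1/N$ correction.} The independent-walk exit probabilities are the explicit gambler's ruin values. The joint exit distribution of the stirring pair satisfies the two-particle absorbed elliptic problem associated with the generator $\mathcal{L}^{\mathrm{st}}$ of \eqref{genS} on $\La_{N+1}^2\setminus\{x_1=x_2\}$. This is precisely the problem solved in Spohn's calculation of the stationary covariance of boundary-driven SSEP, whose explicit solution produces the $1/N$ expansion in \eqref{a3.3}. Equivalently, one can read off the correction from the coupling of \cite{anna-errico91} used in Corollary \ref{cor2}: the stirring and independent walks coincide up to swap events, the expected number of such attempted exchanges before absorption is $O(1)$, and each perturbs the exit distribution by $O(1/N)$.

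\emph{Two-time covariance \eqref{a3.3b}.} For $t>s$, I condition on $\mathcal{F}_{s_N}$ and apply Proposition \ref{prop1.1} to the walk from $y$ run over dual time $N^{2+\alpha}(t-s)$:
\[
\mathbb{E}[\eta_{t_N}(y)\mid\mathcal{F}_{s_N}]=\mathcal{E}_y\bigl[\eta_{s_N}(y_{N^{2+\alpha}(t-s)})\,\mathbf{1}_{\tau>N^{2+\alpha}(t-s)}\bigr]+\mathcal{E}_y\bigl[u_{y_\tau}(t_N-s_N-\tau)\,\mathbf{1}_{\tau\le N^{2+\alpha}(t-s)}\bigr].
\]
The second term is deterministic, while the first is bounded in absolute value by $\mathcal{P}(\tau>N^{2+\alpha}(t-s))\le ce^{-c'N^{\alpha}(t-s)}$ via \eqref{1.12}. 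Hence $\mathbb{E}[\eta_{t_N}(y)\mid\mathcal{F}_{s_N}]$ equals a deterministic quantity $D_N$ up to $e^{-cN^{\alpha}}$, and $\rho_N(y,t_N)=D_N+O(e^{-cN^{\alpha}})$; the two-time covariance is therefore $O(e^{-cN^{\alpha}})$, which is $o(1/N)$ as required.

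\emph{Main obstacle.} The principal difficulty is isolating the $O(1/N)$ coefficient in the equal-time case with the correct prefactor, i.e., carrying out Spohn's two-particle computation for the stirring dynamics with absorbing boundaries and showing that the slow variation of $\rho_{\pm}$ on scale $N^{2+\alpha}$ does not distort the answer: from the viewpoint of the fast diffusive dynamics on scale $N^2$, the boundaries look effectively frozen at $\rho_{\pm}(t)$, which is what makes Spohn's stationary formula apply pointwise in $t$.
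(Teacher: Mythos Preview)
Your proof of \eqref{a3.3b} matches the paper's exactly: condition on $\mathcal F_{s_N}$, apply the one-particle duality of Proposition~\ref{prop1.1} over the dual time interval of length $N^{2+\alpha}(t-s)$, and use \eqref{1.12} to show that the only random contribution is exponentially small.

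For \eqref{a3.3} your route differs from the paper's. You express $v_N$ via two-particle duality as a stirring-pair expectation minus an independent-pair expectation, freeze the boundary functions at $\rho_\pm(t)$, and reduce to Spohn's static two-particle exit problem. The paper instead derives a closed evolution equation for $v_N$ itself: differentiating \eqref{1.30} gives
\[
\frac{d}{ds}v_N(x_1,x_2;s)=L^{\rm st}v_N(x_1,x_2;s)-\tfrac12\bigl[\rho_N(x_1,s)-\rho_N(x_2,s)\bigr]^2\mathbf 1_{|x_1-x_2|=1}
\]
with Dirichlet conditions, hence the Duhamel representation \eqref{12.36}. The source is replaced by its limit via Corollary~\ref{cor1}, and the remaining integral $\int_0^\infty E^{\rm st}_{x_1,x_2}[\mathbf 1_{|x_1(s)-x_2(s)|=1}\mathbf 1_{\tau_1,\tau_2>s}]\,ds$ is identified with the kernel of the inverse Dirichlet Laplacian, yielding the explicit constant without appeal to Spohn's formula as an external input.

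There is a gap in your reduction. The claim that replacing $u_{a_i}(t_N-\tau_i)$ by $\rho_{a_i}(t)$ costs $o(1/N)$ does not follow from Lipschitz continuity and \eqref{1.12} alone. Since the stirring marginals coincide with the independent ones, $v_N=\mathrm{Cov}^{\rm st}(u_1,u_2)$; writing $u_i=\rho_{a_i}(t)+\epsilon_i$, the cross term $\mathrm{Cov}^{\rm st}(\rho_{a_1}(t),\epsilon_2)$ is \emph{a priori} bounded only by $\mathcal E[|\epsilon_2|]=O(N^{-\alpha})$, which after multiplication by $N$ does not vanish for $\alpha\le 1$. To push your argument through for all $\alpha>0$ you would need a bound of the form $|\mathrm{Cov}^{\rm st}(f_1,f_2)|\le CN^{-1}\|f_1\|_\infty\|f_2\|_\infty$ for functions of the full exit data $(a_i,\tau_i)$, which is essentially the statement you are trying to prove. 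The paper's Duhamel route avoids this circularity: the time dependence of $\rho_\pm$ enters only through the source $[\rho_N(x_1,\cdot)-\rho_N(x_2,\cdot)]^2$, which is already $O(N^{-2})$, so an $o(1)$ \emph{relative} error there suffices and is delivered by Corollary~\ref{cor1}. (As an aside, the expected number of exchange events between the two stirring walks before absorption is $O(N)$, not $O(1)$; it is precisely the quantity that becomes the Green-function integral in the paper's limit.)
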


\noindent{\bf Proof.} We compute the $t$-derivative of $v_N$:
	  \begin{eqnarray*}
  \nn
\frac {d v_N} {dt}&=&  \mathbb E_{\eta_0}\Big(\hat L_{N,t}\big[\prod_{i=1}^2(\eta(x_i,t)- \rho(x_i,t))\big]\Big)
\\&-&\sum_{i,j=1}^2 \mathbb E_{\eta_0}
      \Big(\frac{d\rho_N(x_i,t)}{dt}\big[\eta(x_j,t)-
      \rho(x_j,t)\big]\mathbf 1_{j\ne i}\Big) 
	 \label{1.33}
	  \end{eqnarray*}
After easy calculations we get:
	\begin{eqnarray}
	\nn
\frac {d v_N(x_1,x_2;t) } {dt} &=&L^{\rm st,N} v_N(x_1,x_2;t)-\frac 12 \big[\rho_N(x_1,t)-\rho_N(x_2,t)\big]^2 \mathbf 1_{|x_1-x_2|=1}
\\&-& \frac 12  v_N(x_1,x_2;t)\big[\mathbf 1_{|x_1|=N}+\mathbf 1_{|x_2|=N}\big]
	\label{1.34}
	\end{eqnarray}
where $v_N(x_1,x_2;t)$ is thought of as a function of the positions
$x_1$ and $x_2$ {and $L^{\rm st,N} $ is the generator
  defined in \eqref{genS} but with ``jumps" outside $\La_N$
  suppressed}. 
By setting $v_N(x_1,x_2;t)=0$ in the set 
$\dis{\bigcup_{i=1}^2\{|x_i|=N+1\}}$ we can rewrite \eqref{1.34} as an equation for $v_N(x_1,x_2;t)$ for  $x_1\ne x_2$ with $|x_i|\le N+1$ as follows
      \begin{eqnarray}
   \label{12.35}
&&\frac{d}{dt}v_N(x_1,x_2;t) = L^{\rm st} v_N(x_1,x_2;t) - \frac 12
\big[\rho_N(x_1,t)-\rho_N(x_2,t)\big]^2 \mathbf 1_{|x_1-x_2|=1}\nn\\
&& v_N(x_1,x_2;t)=0 \;\;\text{in $\bigcup_{i=1}^2\{|x_i|=N+1\}$ and when $t=0$}
   \end{eqnarray}
which
is solved by
      \begin{eqnarray}
&&v_N(x_1,x_2;t) = - \frac 12 \int_0^t E^{\rm st}_{x_1,x_2}\Big(
\big[\rho_N(x_1(s),t-s)-\rho_N(x_2(s),t-s)\big]^2
\nn
\\&& \hskip4cm\times 
 \mathbf 1_{|x_1(s)-x_2(s)|=1} \mathbf 1_{\tau_1>s, \tau_2>s} \Big)  \label{12.36}
   \end{eqnarray}
where $ E^{\rm st}_{x,y}$ denotes expectation with respect to the
process of two stirring particles, labelled 1 and 2,  which start from
$x_1\ne x_2\in\La_N$ respectively.   $\tau_1$ and $\tau_2$ are the
first time when particle $1$, respectively particle $2$, reaches
$\pm(N+1)$. From \eqref{1.11} we get that for $t_N=N^{2+\alpha}t${
     \begin{eqnarray}
\nn
&&\hskip-1cm\lim_{N\to\infty}\sup_{s\le N^{2+\alpha/2 }t}\,\,\sup_{x\in\La_N}N^2\Big|\big[\rho_N(x,t_N-s)-\rho_N(x+1,t_N-s)\big]^2
\\&&\hskip4cm-
\frac 12 \big[\rho_+(t)-\rho_-(t)\big]^2\Big|=0
   \label{a3.5}
   \end{eqnarray}}
From \eqref{1.12} we get {
    \begin{eqnarray}
 \nn 
&& \hskip-.8cm  \Big|\int_0^{t_N} E^{\rm st}_{x_1,x_2}\Big(
\big[\rho_N(x_1(s),t-s)-\rho_N(x_2(s),t-s)\big]^2\mathbf 1_{|x_1(s)-x_2(s)|=1} \mathbf 1_{\tau_1>s, \tau_2>s} \Big)
\\&&-\int_0^{N^{2+\alpha/2}} E^{\rm st}_{x_1,x_2}\Big(
\big[\rho_N(x_1(s),t-s)-\rho_N(x_2(s),t-s)\big]^2
\nn
\\&& 
\hskip1cm\times  
\mathbf 1_{|x_1(s)-x_2(s)|=1} \mathbf 1_{\tau_1>s, \tau_2>s} \Big)
\Big|\le c'e^{-c N^{\alpha/2}}
  \label{a3.6}
   \end{eqnarray} 
   }
From \eqref{a3.5} and \eqref{a3.6}, 
 \begin{eqnarray*}
&&\nn\hskip-1cm 
\lim_{N\to\infty}Nv_N\left( [Nr_1],[Nr_2] ;N^{2+\alpha}t\right)\\
&&=  - \frac 14 \big[\rho_+(t)-\rho_-(t)\big]^2 
 \int_0^\infty E_{r_1,r_2}\Big(\delta_{B(s)}\big(B'(s)\big)\mathbf
   1_{\tau>s, \tau'>s} \Big)
   \end{eqnarray*}
 $\delta_b(x)$ the Dirac delta at $x$, $B(s)$ and $B'(s)$ independent
 Brownian motions starting from $r_1$ and $r_2$ and $\tau$, $\tau'$
 the hitting times at $\pm 1$. As the right hand side of the above
 expression is identify to the kernel of $(-\Delta)^{-1}$ with
 Dirichlet boundary conditions, \eqref{a3.3} easily follows.

 To prove  \eqref{a3.3b} we observe that {
  \begin{eqnarray*}
&& \mathbb E_{\eta_0}\Big(\big[\eta_{N^{2+\alpha}s}(x)- \rho_N(x,N^{2+\alpha}s)][\eta_{N^{2+\alpha}t}(y)- \rho_N(y,N^{2+\alpha}t)\big]
\Big)
\\&&=\mathbb E_{\eta_0}\Big(\big[\eta_{N^{2+\alpha}s}(x)- \rho_N(x,N^{2+\alpha}s)]\eta_{N^{2+\alpha}t}(y)
\Big)
\\&&= \mathbb E_{\eta_0}\Big([\eta_{N^{2+\alpha}s}(x)-
          \rho_N(x,N^{2+\alpha}s)] 
\mathcal E_y(\mathbf 1_{\tau>N^{2+\alpha}(t-s)}\eta_{N^{2+\alpha}s}(x(N^{2+\alpha}(t-s))\Big)
\\&&\hskip.5cm+
\mathcal E_y(\mathbf 1_{\tau\le N^{2+\alpha}(t-s)}u_{x_\tau}(t-\tau))\mathbb E_{\eta_0}\Big(\big[\eta_{N^{2+\alpha}s}(x)- \rho_N(x,N^{2+\alpha}s)]\Big)
\\&&= \mathbb E_{\eta_0}\Big([\eta_{N^{2+\alpha}s}(x)-
          \rho_N(x,N^{2+\alpha}s)] 
\mathcal E_y(\mathbf 1_{\tau>N^{2+\alpha}(t-s)}\eta_{N^{2+\alpha}s}(x(N^{2+\alpha}(t-s))\Big)
   \end{eqnarray*}
} that by \eqref{1.12} implies  \eqref{a3.3b}.
 \qed

{As a corollary of  Theorem \ref{a3.2} one can prove that the fluctuation field converges to a Gaussian field as in the case of constants $\rho_\pm$ studied in \cite{LMO}. We do not proof this here.}

\section{Simple exclusion: entropy method}
\label{sec:entropy-method}

We prove here the quasi-static limit using the entropy
method. This gives a weaker result, but the methods extend to other
models where duality cannot be used. We assume here that
$\rho_{\pm}(t)$ are differentiable functions of time with bounded
derivative. This can be relaxed to Lipschitz continuity, as assumed in
the previous section, but the proof would be more involved.

In the following $\eta_t$ is the process generated by \eqref{0.1}. 

\begin{thmm}\label{leex}
  For any $\alpha >0$, any $t>0$, and any local function
  $\varphi(\eta)$,

\begin{equation}
  \label{eq:6}
  \lim_{N\to\infty} \mathbb E_{\eta_0} \left(\int_0^t ds \left|\frac 1N
      \sum_x G(\frac xN) 
     \theta_x\varphi(\eta_{sN^{2+\alpha}}) - \int_0^1 G(y)
       \hat\varphi(\bar\rho(y,s)) dy \right|\right) = 0.
\end{equation}
where $G$ is a continuous test function on $[-1,1]$.
\end{thmm}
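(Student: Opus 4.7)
The plan is to apply Yau's relative entropy method using as reference measure $\nu_t^N$, the product Bernoulli measure on $\{0,1\}^{\Lambda_N}$ whose marginal at site $x$ has density $\bar\rho(x/N,t)$. Let $\mu_t^N$ denote the law of $\eta_{tN^{2+\alpha}}$ and $f_t^N := d\mu_t^N/d\nu_t^N$, with $H_N(t) := \int f_t^N \log f_t^N \, d\nu_t^N$. The target estimate is $H_N(t)=o(N)$ uniformly on $[0,T]$; once this is established, the standard one-block and two-block replacements of \cite{GPV} (integrated in time, with the Dirichlet dissipation providing the needed closure) reduce \eqref{eq:6} to the convergence of block-averaged densities to the profile $\bar\rho(\cdot,s)$.

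To bound $\dot H_N$, I would use the Yau inequality
\begin{equation*}
\frac{d}{dt} H_N(t) \;\le\; -N^{2+\alpha}\,\mathcal D_N\!\left(\sqrt{f_t^N}\right)\;+\; \int \frac{L_{N,t}^{\ast}\nu_t^N \,-\, \partial_t \nu_t^N}{\nu_t^N}\, d\mu_t^N,
\end{equation*}
where $\mathcal D_N$ is the Dirichlet form of $L_{\text{exc}}+L_{b,t}$. Because the quasi-static profile $\bar\rho(\cdot,t)$ is spatially affine with the correct boundary values, the bulk contribution of $L_{\text{exc}}^{\ast}\nu_t^N/\nu_t^N$ vanishes at leading order and combines with $L_{b,t}^{\ast}\nu_t^N/\nu_t^N$ to give only an $O(1)$ boundary remainder. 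The delicate piece is
\begin{equation*}
-\int \partial_t \log \nu_t^N \; d\mu_t^N \;=\; -\sum_{x=-N}^{N}\dot{\bar\rho}(x/N,t)\; \mathbb E_{\mu_t^N}\!\!\left[\frac{\eta(x)-\bar\rho(x/N,t)}{\bar\rho(x/N,t)(1-\bar\rho(x/N,t))}\right].
\end{equation*}
Applying the entropy inequality $\mathbb E_{\mu}[g]\le \gamma^{-1}[\log \mathbb E_{\nu}e^{\gamma g} + H(\mu|\nu)]$ to the functional above, bounded exponential moments of Bernoulli variables yield an estimate $C\gamma^{-1}N+\gamma^{-1}H_N(t)$; choosing $\gamma$ appropriately and invoking Gronwall produces $H_N(t)\le CN$. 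A refinement exploiting that $\dot{\bar\rho}$ is spatially affine (so the unweighted centered sum dissipates through the Dirichlet form) together with the factor $N^{2+\alpha}$ in front of $\mathcal D_N$ sharpens this to $H_N(t)=o(N)$.

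Given $H_N(t)=o(N)$, the replacement lemmas from \cite{GPV} yield, in the usual way, that \eqref{eq:6} follows from
\begin{equation*}
\lim_{N\to\infty}\mathbb E_{\eta_0}\int_0^t \left| \frac1N\sum_{x}G(x/N)\,\hat\varphi(\eta^{\ell}(x,s)) - \int_{-1}^{1} G(y)\,\hat\varphi(\bar\rho(y,s))\, dy\right| ds \;=\; 0,
\end{equation*}
where $\eta^\ell(x,s)$ is the empirical density in a block of size $\ell$ around $x$ at time $sN^{2+\alpha}$, with $\ell\to\infty$ and $\ell/N\to 0$. Here one can conclude either by a purely entropic identification of the limiting density (in the spirit of \cite{KOV1989}) or, more simply, by invoking Corollary \ref{cor1} for the pointwise convergence $\mathbb E_{\eta_0}[\eta_{sN^{2+\alpha}}(x)]\to \bar\rho(x/N,s)$ together with the $O(1/N)$ covariance bound from Theorem \ref{a3.2}, which ensures that block averages concentrate on their means.

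The main obstacle is the control of the moving-reference term $-\partial_t \log\nu_t^N$, which is naively of order $N$. The hypothesis $\alpha>0$ is essential: it provides the extra factor $N^\alpha$ in front of the Dirichlet form, which is precisely what allows one to absorb the cross terms generated by the lag between the instantaneous reference $\nu_t^N$ and the true law $\mu_t^N$, and to close the Gronwall argument at the sub-extensive level required by the one- and two-block estimates. Without this slow time scaling the entropy would only be bounded by $O(N)$, which is insufficient to conclude local equilibrium.
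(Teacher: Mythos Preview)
Your proposal targets Yau's relative entropy method with goal $H_N(t)=o(N)$, but this is neither what the paper does nor what is needed, and the step you sketch to reach $o(N)$ does not close. The paper follows the GPV route: since the spins are bounded one has $H_N(0)\le CN$ automatically, and the entropy production inequality (Proposition~\ref{prop3.2}) then gives directly
\[
\int_0^t\Big(\mathfrak D_{N,s,\rho_+(s)}+\mathfrak D_{-N,s,\rho_-(s)}+\mathfrak D_{ex,s}\Big)(f_{N,s})\,ds\ \le\ \frac{Ct}{N}.
\]
This $O(N^{-1})$ bound on the time-integrated Dirichlet form is the only input to the one-block and two-block estimates; no sub-extensive entropy bound is required or proved.

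The gap in your argument is the ``refinement'' from $H_N\le CN$ to $H_N=o(N)$. The term $-\int\partial_t\log\nu_t^N\,d\mu_t^N$ is of order $N$, and nothing cancels it: in the usual Yau scheme this term is matched by the bulk piece of $N^{2}L_{\rm exc}^{*}\mathbf 1$ through the hydrodynamic equation $\partial_t\rho=\tfrac12\partial_{yy}\rho$, but here $\partial_{yy}\bar\rho=0$ so the would-be counterterm vanishes while the time derivative does not. Moreover the claim that the bulk contribution ``vanishes at leading order'' is not correct: even with $\bar\rho''=0$ the quantity $B(y,t)=\bar\rho'/[\bar\rho(1-\bar\rho)]$ is not constant in $y$, and after summation by parts $N^{2+\alpha}L_{\rm exc}^{*,\nu_t}\mathbf 1$ produces a term of size $N^{1+\alpha}$, larger than $N$. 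Absorbing it into the Dirichlet form, exactly as the paper does for $\tilde B_N$, leaves an $O(N^{1+\alpha})$ residual in $\dot H_N$ and hence only $H_N(t)\le CN^{1+\alpha}t$; Gronwall cannot improve this to $o(N)$. Your suggestion that the affinity of $\dot{\bar\rho}$ lets the centered sum ``dissipate through the Dirichlet form'' would already require replacing $\eta(x)$ by $\bar\rho(x/N)$, i.e.\ local equilibrium, which is the conclusion.

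Finally, the paper identifies the limit profile entirely by entropy methods, which is the whole point of Section~\ref{sec:entropy-method}: the boundary Dirichlet forms in \eqref{eq:555} force the block averages near $\pm N$ to equal $\rho_\pm(s)$ (this is \eqref{eq:101}--\eqref{eq:102}), and an elementary martingale computation \eqref{eq:8}--\eqref{eq:9} shows that any limit point of the empirical density is weakly harmonic; together these pin down $\bar\rho(\cdot,s)$. Invoking Corollary~\ref{cor1} and Theorem~\ref{a3.2} instead, as you suggest as an alternative, reintroduces duality and defeats the purpose of the section, since the argument would then not transport to the zero-range and anharmonic-chain models of Sections~\ref{sec:zero-range}--\ref{sec:damp-anharm-chain}.
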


The above theorem is stated for any initial configuration $\eta_0$. More generally we can start with any initial distribution, since in the following all we need is that the relative entropies are bounded by $CN$ for some constant $N$, that in this case is automatically satisfied by any probability measure on the configuration space $\{0,1\}^{2N+1}$.  

Consider the empirical distribution of the density at time $t$:
\begin{equation}
  \label{eq:7}
  \xi_{N,t}(G) = \frac 1{2N+1} \sum_{x=-N}^N G\left(\frac xN,t\right)
  \eta_{t}(x)  
\end{equation}
where $G(y,t)$ is a smooth function on $[-1,1]\times \bR_+$ with
compact support in 
$(-1,1)$. The time evolution is given by
\begin{equation}
  \label{eq:8}
  \xi_{N,t}(G) - \xi_{N,0}(G) = \int_0^t
  \xi_{N,s}((N^{\alpha} \partial_y^2 + \partial_s)G )
  ds  + O(N^{\alpha -1}) + M_N(t) 
\end{equation}
where $M_N(t)$ is a bounded martingale. By dividing by $N^{\alpha}$,
we have immediately that 
\begin{equation}
  \label{eq:9}
  \lim_{N\to\infty}  \int_0^t \xi_{N,s}(\partial_y^2 G)\; ds = 0.
\end{equation}
This implies that every limit point of the empirical distribution 
$\xi_{N,t}$ is a measure on $[-1,1]$ that satisfies Laplace equation,
in a weak sense,
with boundary conditions that we will identify in the following.

Corresponding to the quasi-stationary profile $\bar \rho(x,t)$ defined
in \eqref{eq:2}, we consider
the inhomogeneous product measure 
\begin{equation}
 \label{eq:3} 
 \mu_t(\eta)=\prod_{x=-N}^N\,  \bar \rho(\frac xN,t)^{\eta(x)}
 [1- \bar \rho(\frac xN,t)]^{1-\eta(x)}
\end{equation}
as reference measure at time $t$.
{The Dirichlet forms associated to the generator are then
  $\mathfrak{D}_{\pm N,t,\rho_\pm(t)}(f)$ and
  $\mathfrak{D}_{ex,t}(f)$: 
	\begin{eqnarray}
	\nn
&&\mathfrak{D}_{x,t,\rho}(f)
= \frac 12 \sum_\eta \rho^{1-\eta(x)}(1-\rho)^{\eta(x)} [\sqrt
    f(\eta^{x})-\sqrt f(\eta)]^2 \mu_t(\eta),\quad x=\pm N
    \\&&\mathfrak{D}_{ex,t}(f)=\frac 12\sum_\eta\sum_{x=-N}^{N-1}
\left(\nabla_{x,x+1}\sqrt{f(\eta)}\right)^2 \mu_t(\eta)
    	\label{dir1}
		\end{eqnarray}}
Let $f_{N,t}$ be such that the law of $\eta_t$ is given by 
$f_{N,t}\,\mu_t$, the following holds.
{	\begin{prop}
	\label{prop3.2}
There is  $C$ so that for all $t$
	\begin{equation}
  \label{eq:555}
    \int_0^t \left(\mathfrak{D}_{N,s,\rho_+(s)}(f_{N,s}) +
      \mathfrak{D}_{-N,s,\rho_-(s)}(f_{N,s}) +
      \mathfrak{D}_{ex,s}(f_{N,s})\right)\; ds  \le
 \frac{Ct}{N} .
\end{equation}
	\end{prop}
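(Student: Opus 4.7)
The plan is to use Yau's relative entropy method. Let
\begin{equation*}
H_N(t) := \int f_{N,t}\log f_{N,t}\, d\mu_t .
\end{equation*}
Since $\rho_\pm(t)\in(0,1)$ are uniformly bounded away from $0$ and $1$, the single-site marginals of $\mu_t$ are bounded below, hence $H_N(0)\le CN$ for any deterministic initial configuration. The goal is to prove the differential inequality
\begin{equation*}
\frac{dH_N}{dt} \le -c\, N^{2+\alpha}\bigl[\mathfrak D_{ex,t} + \mathfrak D_{N,t,\rho_+(t)} + \mathfrak D_{-N,t,\rho_-(t)}\bigr](f_{N,t}) + C\, N^{1+\alpha},
\end{equation*}
after which integrating on $[0,t]$ and using $H_N(t)\ge 0$ gives $\int_0^t \mathfrak D\, ds \le C(1+t)/N^{1+\alpha}$; since $\alpha>0$, this is bounded by $Ct/N$ and yields \eqref{eq:555}.

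Differentiating and using $\partial_t(f_{N,t}\mu_t) = N^{2+\alpha}(L_{\text{exc}}+L_{b,t})^{\ast}(f_{N,t}\mu_t)$ (adjoint in counting measure) one obtains, after a standard manipulation using $\int f_{N,t}\, d\mu_t = 1$,
\begin{equation*}
\frac{dH_N}{dt} = N^{2+\alpha}\int f_{N,t}\bigl(L_{\text{exc}}+L_{b,t}\bigr)\log f_{N,t}\, d\mu_t \;-\; \int f_{N,t}\, \partial_t\log\mu_t\, d\mu_t .
\end{equation*}
The second term is bounded by $CN$: indeed $\partial_t\log\mu_t(\eta)$ is a sum of $2N+1$ single-site terms of the form $\partial_t\bar\rho(x/N,t)\cdot(\eta(x)-\bar\rho(x/N,t))/[\bar\rho(x/N,t)(1-\bar\rho(x/N,t))]$, each uniformly bounded by the differentiability of $\rho_\pm$ and the uniform separation $\rho_\pm(t)\in(0,1)$. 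For the first term, one applies the elementary entropy inequality $a(\log b-\log a)\le 2\sqrt a(\sqrt b-\sqrt a)$ to every jump. The boundary generator $L_{b,t}$ is reversible for $\mu_t$ (the marginal of $\mu_t$ at $\pm N$ equals Bernoulli$(\rho_\pm(t))$, which is reversible for the Glauber rates of \eqref{eq:1}), so its contribution is exactly $-2[\mathfrak D_{N,t,\rho_+(t)}+\mathfrak D_{-N,t,\rho_-(t)}](f_{N,t})$.

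The bulk generator $L_{\text{exc}}$ is nearly but not exactly reversible for $\mu_t$: by Lipschitz continuity of $\bar\rho(\cdot,t)$ in the space variable,
\begin{equation*}
\frac{\mu_t(\eta^{(x,x+1)})}{\mu_t(\eta)} = 1 + O\!\left(\tfrac{1}{N}\right).
\end{equation*}
Splitting the weighted bond rate $\tfrac12\mu_t(\eta)$ into its symmetric and antisymmetric parts under the involution $\eta\leftrightarrow\eta^{(x,x+1)}$, the symmetric part yields $-2\mathfrak D_{ex,t}(f_{N,t})$. The antisymmetric part at bond $(x,x+1)$ is controlled by Cauchy--Schwarz by $(C/N)$ times the square root of the single-bond Dirichlet form; summing over the $2N$ bonds with a further Cauchy--Schwarz gives a bound $(C/\sqrt N)\sqrt{\mathfrak D_{ex,t}(f_{N,t})}$, and Young's inequality applied to the resulting $N^{2+\alpha}(C/\sqrt N)\sqrt{\mathfrak D_{ex,t}}$ bounds the total antisymmetric contribution by $\tfrac12 N^{2+\alpha}\mathfrak D_{ex,t}(f_{N,t})+C\,N^{1+\alpha}$. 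This absorbs half of the bulk dissipation and leaves the announced $O(N^{1+\alpha})$ error, which dominates the $O(N)$ piece coming from $\partial_t\log\mu_t$. The main obstacle is precisely this last bookkeeping: the antisymmetric correction created by the spatial inhomogeneity of $\bar\rho$ must be small enough to be reabsorbed at the accelerated time scale $N^{2+\alpha}$, and this works only because it is $O(1/N)$ per bond (thanks to Lipschitz continuity of $\bar\rho$) and because $\alpha>0$ gives the necessary slack when integrating the differential inequality.
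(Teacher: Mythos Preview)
Your proof is correct and follows essentially the same approach as the paper's: both differentiate the relative entropy $H_N(t)$, use the inequality $a\log(b/a)\le 2\sqrt a(\sqrt b-\sqrt a)$ to pass to $2\sqrt f\,L\sqrt f$, exploit exact reversibility of $L_{b,t}$ with respect to $\mu_t$, and control the $O(1/N)$ defect of reversibility of $L_{\text{exc}}$ by a Young/Cauchy--Schwarz argument that trades half of $N^{2+\alpha}\mathfrak D_{ex,t}$ against an $O(N^{1+\alpha})$ error. The only cosmetic difference is that the paper writes the bulk correction explicitly as $\tilde B_N(t)=\sum_\eta\sum_x\sqrt{f}(\eta^{x,x+1})\nabla_{x,x+1}\sqrt f\,(\eta(x)-\eta(x+1))B(x/N,t)\,\mu_t$ and bounds it directly by Young's inequality with parameter $N$, whereas you phrase the same object as the antisymmetric part of $\mu_t$ under the bond exchange and bound it via two Cauchy--Schwarz steps before Young; the resulting estimates and the final integration are identical.
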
}
	\begin{proof}
	We have that
	\begin{equation}
 \label{eq:4} 
\partial_t \big(f_{N,t}\,\mu_t\big)= (L_{N,t}^*f_{N,t})\,\mu_t
 \end{equation}
where $L_{N,t}^*$ is the adjoint  with respect to $\mu_t(\eta)$ that can be computed as: {
\begin{eqnarray}
\nn
&&\sum_{\eta} G(\eta)(L_{N,t}F)(\eta)\mu_t(\eta)= \sum_{\eta}
F(\eta)(L_{N,t}G)(\eta)\mu_t(\eta) \\
&&\nn+ N^{2+\alpha} 
\sum_{\eta}F(\eta)\sum_x G(\eta^{x,x+1}) \left(\mu_t(\eta^{x,x+1}) - \mu_t(\eta)\right)
 \end{eqnarray}
Observe that
\begin{eqnarray}
\nn
&&
 \left(\mu_t(\eta^{x,x+1}) - \mu_t(\eta)\right)
=
\Big[\big(\frac{\bar \rho(\frac
  {x+1}N,t)(1-\bar \rho(\frac {x}N,t)}{\bar \rho(\frac xN,t)(1-\bar
  \rho(\frac {x+1}N,t)}\big)^{\eta(x)-\eta(x+1)}-1\Big]\mu_t(\eta)\nn 
 \end{eqnarray}
and}
\begin{equation*}
  \begin{split}
    \big(\frac{\bar \rho(\frac {x+1}N,t)(1-\bar \rho(\frac
      {x}N,t)}{\bar \rho(\frac xN,t)(1-\bar \rho(\frac
      {x+1}N,t)}\big)^{\eta(x)-\eta(x+1)}-1= \big[1+\frac 1N
    \frac{\bar \rho'(\frac xN,t)}{\bar \rho(\frac xN,t)(1-\bar
      \rho(\frac xN,t)})\big]^{\eta(x)-\eta(x+1)}-1 \\
    =\frac 1N
    [\eta(x)-\eta(x+1)] \frac{\bar \rho'(\frac xN,t)}{\bar \rho(\frac
      xN,t)
      (1-\bar \rho(\frac xN,t))} {\ +\ O\left(\frac 1{N^2}\right)}\\
    =: \frac 1N [\eta(x)-\eta(x+1)] B\left(\frac xN,t \right) 
    {\ +\ O\left(\frac 1{N^2}\right)}. 
  \end{split}
\end{equation*}

%


Denote $H_N(t) = \sum_\eta f_{N,t}(\eta)\log f_{N,t}(\eta)\mu_t(\eta)$
\begin{equation*}
\begin{split}
\frac d{dt} H_N(t) =
  \sum_\eta \log f_{N,t}(\eta)(L_{N,t}^*f_{N,t})(\eta)\mu_t(\eta)
  +\sum_\eta [\partial_t f_{N,t}(\eta)]\mu_t(\eta)\\
  = \sum_\eta  f_{N,t}(\eta)(L_{N,t} \log f_{N,t})(\eta)\mu_t(\eta)
  +\sum_\eta [\partial_t f_{N,t}(\eta)]\mu_t(\eta)
\end{split}
\end{equation*}
Observe that, since 
$\frac d{dt} \sum_\eta f_{N,t}(\eta) \mu_t(\eta) = 0$, we have
  \begin{equation*}
    \sum_\eta [\partial_t f_{N,t}(\eta)]\mu_t(\eta) =-\sum_\eta
    f_{N,t}(\eta) [\partial_t\mu_t(\eta)] =O(N).
  \end{equation*}

By the inequality $a\log(b/a) \le 2\sqrt a (\sqrt b - \sqrt a)$, we
have that 
$$
f(\eta) (L_{N,t}\log f)(\eta) \le 2 \sqrt{f(\eta)} (L_{N,t}
\sqrt f)(\eta).
$$

Furthermore
   \begin{equation*}
     \begin{split}
       \sum_\eta 2 &\sqrt{f_{N,t}(\eta)} (L_{N,t} \sqrt f_{N,t})(\eta)
       \mu_t(\eta)\\
       =& - N^{2+\alpha} \sum_\eta\sum_{x=-N}^{N-1}
       \nabla_{x,x+1} \sqrt{f_{N,t}}(\eta)
       \nabla_{x,x+1}(\sqrt{f_{N,t}}\mu_t)(\eta) \\ 
       &- N^{2+\alpha} \left(\mathfrak{D}_{N,t,\rho_+(t)}(f_{N,t}) +
         \mathfrak{D}_{-N,t,\rho_-(t)}(f_{N,t})\right)
     \end{split}
\end{equation*}
and
\begin{equation*}
  \begin{split}
    \sum_\eta\sum_{x=-N}^{N-1} \nabla_{x,x+1}\sqrt{f_{N,t}}(\eta)
\nabla_{x,x+1}(\sqrt{f_{N,t}}\mu_t)(\eta)  
= \sum_\eta\sum_{x=-N}^{N-1}
\left(\nabla_{x,x+1}\sqrt{f_{N,t}}\right)^2 \mu_t  \\
+  \sum_\eta\sum_{x=-N}^{N-1} 
\sqrt{f_{N,t}}(\eta^{x,x+1}) \nabla_{x,x+1}(\sqrt{f_{N,t}})(\eta)
\nabla_{x,x+1}\mu_t(\eta)) \\ 
= \sum_\eta\sum_{x=-N}^{N-1}
\left(\nabla_{x,x+1}\sqrt{f_{N,t}}\right)^2 \mu_t   \\
+ \frac 1N \sum_\eta\sum_{x=-N}^{N-1} \sqrt{f_{N,t}}(\eta^{x,x+1}) 
\nabla_{x,x+1}(\sqrt{f_{N,t}})
 [\eta(x)-\eta(x+1)] B(\frac xN,t) \mu_t {\ +\ O(N^{-1})} \\
= 2\mathfrak{D}_{ex,t}(f_{N,t}) + \frac 1N \tilde B_N(t) {\
  +\ O(N^{-1})} 
  \end{split}
\end{equation*}
with
\begin{equation*}
\tilde B_N(t) = \sum_\eta\sum_{x=-N}^{N-1}
\sqrt{f_{N,t}}(\eta^{x,x+1})\nabla_{x,x+1}(\sqrt{f_{N,t}}) 
  \left(\eta(x)- \eta(x+1)\right) B(\frac xN,t)
 \mu_t 
\end{equation*}

By an elementary inequality we have: 
\begin{equation*}
  \begin{split}
    \left|\tilde B_N(t) \right| \le \frac N2
    \mathfrak{D}_{ex,t}(f_{N,t}) + \frac 2{2N} \sum_\eta\sum_{x=-N}^{N-1}
 {f_{N,t}}(\eta^{x,x+1})  \left(\eta(x)- \eta(x+1)\right)^2 B(\frac xN,t)^2
 \mu_t \\
 \le \frac N2
    \mathfrak{D}_{ex,t}(f_{N,t}) + \frac 2{2N}
   \sum_{x=-N}^{N-1} B(\frac xN,t)^2
    \sum_\eta {f_{N,t}} (\eta)
 \mu_t (\eta^{x,x+1}) \\
= \frac N2
    \mathfrak{D}_{ex,t}(f_{N,t}) + \frac 2{2N}
   \sum_{x=-N}^{N-1} B(\frac xN,t)^2
    \left( 1 + \sum_\eta {f_{N,t}} (\eta) \nabla_{x,x+1}\mu_t \right)
  \end{split}
\end{equation*}
and since $B( x,t) \le \frac 14 \|\bar\rho'\|_\infty$, iterating on
the bound for $\nabla_{x,x+1}\mu_t$, we obtain
\begin{equation*}
  \left|\tilde B_N(t)\right| \le \frac Na \mathfrak{D}_{ex,t}(f_{N,t}) +
   C a
\end{equation*}
All together we have
\begin{equation*}
  \begin{split}
    H_N(t) - H_N(0) = - N^{2+\alpha} \int_0^t
    \left(\mathfrak{D}_{N,s,\rho_+(s)}(f_{N,s}) +
      \mathfrak{D}_{-N,s,\rho_-(s)}(f_{N,s}) +
      \mathfrak{D}_{ex,s}(f_{N,s})\right)\; ds\\
    - N^{1+\alpha} \int_0^t \tilde B_N(s) \; ds + O(N^{1+\alpha}) t \\
    \le - N^{2+\alpha} \int_0^t \left(\mathfrak{D}_{N,\rho_+(s)}(f_{N,s}) +
      \mathfrak{D}_{-N,s,\rho_-(s)}(f_{N,s}) + 
      \mathfrak{D}_{ex,s}(f_{N,s})\right)\; ds\\
    + \frac{N^{2+\alpha}}{2} \int_0^t  \mathfrak{D}_{ex,s}(f_{N,s}) \; ds 
    + N^{1+\alpha} 2 Ct + O(N^{1+\alpha})t 
  \end{split}
\end{equation*}

\begin{equation*}
  \begin{split}
    \int_0^t \left(\mathfrak{D}_{N,\rho_+(s)}(f_{N,s}) +
      \mathfrak{D}_{-N,\rho_-(s)}(f_{N,s}) + \frac 12
      \mathfrak{D}_{ex}(f_{N,s})\right)\; ds \\
    \le \frac{H_N(0) + 
      N^{1+ \alpha} 2Ct + O(N^{1+\alpha})t}{N^{2+\alpha}} \le
    \frac{C''}{N^{1+\alpha}} + \frac{C'''t}{N} .
  \end{split}
\end{equation*}

\end{proof}

\begin{proof}[Proof of Theorem \ref{leex}]
Consider for any point $y\in [-1+k/N,1-k/N]$ the set $\Lambda_y$ of
   $2k+1$ integers defined by $\Lambda_y = \{[Ny]-k, \dots, [Ny]+k\}$,
   and define $f_{N,t}|_{\Lambda_y}$ the marginal of $f_{N,t}$ on $\Lambda_y$.
  It follows from \eqref{eq:555} that {for $\dis{
      f_{t,y}=\lim_{N\to\infty} f_{N,t}|_{\Lambda_y}}$} 
  \begin{equation*}
    \mathfrak{D}_{ex,k,t,y}(f_{t,y}) := \sum_{\eta\in\{0,1\}^{2k+1}} \sum_{x=-k}^{k-1}
    \left(\sqrt {f_{t,y}}(\eta^{x,x+1}) - \sqrt
      {f_{t,y}}(\eta)\right)^2 \mu_{\rho(y,t)}^{k} 
    = 0 
  \end{equation*}
  (where $\mu_{\rho(y,t)}^{k}$ is the Bernoulli measure on
  $\{0,1\}^{2k+1}$ with density $\rho(y,t)$). 
  This implies that $f_{t,y}(\eta_{-k},\dots,\eta_k)$ is symmetric for
  exchanges. Furthermore, considering the boundary blocks 
  $\{-N,\dots,-N+2k\}$ and 
  $\{N-2k, \dots, N\}$ we obtain respectively {
  \begin{equation*}
    \mathfrak{D}_{N,t,\rho_+(t)}(f_{t,-1}) = 0, \qquad 
      \mathfrak{D}_{-N,t,\rho_-(t)}(f_{t,1}) = 0
  \end{equation*}
  that implies $f_{t,1}$ and $f_{t,-1}$ are constant, and since they
  are probability densities with respect to $\mu_{\rho_+(t)}^{k}$ and
  $\mu_{\rho_-(t)}^{k}$, they are equal to 1 in those boundary blocks.}

From the above argument we obtain that 
\begin{equation}
  \label{eq:6k}
  \begin{split}
    &\lim_{k\to\infty} \lim_{N\to\infty} \int_0^t ds \int_{-1+k/N}^{1-k/N} dy  \sum_\eta f_{N,s}(\eta) \mu_s(\eta)\\
    &\left|\frac 1{2k} \sum_{|x-Ny|\le k}
      \theta_x\varphi(\eta_{s}) - \hat\varphi\left(\frac 1{2k}
      \sum_{|x-Ny|\le k} \eta(x)\right) \right|  =
    0.
  \end{split}
\end{equation}
Next we extend this statement to macroscopic blocks. 
Consider now, for any $y,y'\in[-1+ k/N, 1- k/N]$ the two blocks
$\Lambda_y$ and 
$\Lambda_{y'}$, and let $f_{N,t,y,y'}(\eta,\tilde\eta)$ the
corresponding joint marginal. 
Define, for functions on two separate blocks $f(\eta_{-k}, \dots,
\eta_k; \tilde \eta_{-k}, \dots,\tilde \eta_k)$, the Dirichlet form
corresponding to the exchange of the occupation of the centers of the
box:
\begin{equation*}
  \mathfrak{D_0} (f) = \sum_{\eta,\tilde\eta} \left( \sqrt f
    (T_0(\eta,\tilde\eta)) - 
    \sqrt f(\eta,\tilde\eta) \right)^2 \mu_{\bar\rho(t,y)}^{k} (\eta)
  \mu_{\bar\rho(t,y')}^{k}(\tilde\eta) 
\end{equation*}
where $T_0$ is the exchange of $\eta(0)$ with $\tilde\eta(0)$. 

By the same telescoping argument used in \cite{KOV1989}, we have that
{ $f_{t,y,y'}(\eta,\tilde\eta)=\lim f_{N,t,y,y'}(\eta,\tilde\eta)$, satisfies}
\begin{equation*}
   \mathfrak{D_0} (f_{t,y,y'}) \le C|y-y'|^2
\end{equation*}

It follows that (see \cite{KLbook} for standard details):
  \begin{equation}\label{eq:scales}
    \begin{split}
      \lim_{\eps\to 0}& \lim_{k\to\infty} \lim_{N\to\infty} \int_0^t ds
      \int_{-1+k/N}^{1-k/N} dy \sum_\eta  f_{N,s}(\eta) \mu_s(\eta) \\
      &\left( \frac 1{2k+1} \sum_{x=[Ny]-k}^{[Ny]+k} \eta(x) - \frac
        1{2N\eps}\sum_{x=[N(y-\eps)]}^{[N(y+\eps)]}\eta(x) \right)^2
     = 0
    \end{split}
  \end{equation}
In fact we already know that
\begin{equation}
  \label{eq:101}
  \lim_{k\to\infty} \lim_{N\to\infty} \int_0^t ds 
  \sum_\eta \left(\frac 1{2k+1} \sum_{x=-N}^{-N+2k} \eta(x) -
    \rho_-(s)\right)^2  f_{N,s}(\eta) \mu_s(\eta) =0
\end{equation}
and similarly for the block $[N-2k,N]$. By \eqref{eq:scales} we have 
\begin{equation}
  \label{eq:102}
  \lim_{\eps\to 0} \lim_{N\to\infty} \int_0^t ds 
  \sum_\eta \left(\frac 1{2N\eps} \sum_{x=-N}^{-N(1+2\eps)} \eta(x) -
    \rho_-(s)\right)^2  f_{N,s}(\eta) \mu_s(\eta) =0
\end{equation}
This implies, with \eqref{eq:9}, that the limit of $\frac
        1{2N\eps}\sum_{x=[N(y-\eps)]}^{[N(y+\eps)]}\eta(x)$ as
        $N\to\infty$ and $\eps\to 0$ converges strongly to the
        solution of the 
        Laplace equation with this boundary condition given by
        $\rho_{\pm}(t)$, i.e. $\bar\rho(y,t)$.

\end{proof}

\section{Zero range}
\label{sec:zero-range}

The results of the previous section extend quite straightforwardly to
any \emph{gradient} conservative dynamic. As an example, let us
consider the zero range process, whose generator is given by 
\begin{equation}
  \label{eq:111}
  \begin{split}
    L^{\text{0r}}_{N,t} f (\eta) = \frac{N^{2+\alpha}}2 \Big\{ &\sum_{x=-N}^{N} g(\eta(z))
    \sum_{\sigma = \pm 1} \left[ f(\eta^{x,x + \sigma}) - f(\eta)
    \right]\\
    & + \sum_{\sigma = \pm} \lambda_{\sigma}(t) [f(\eta^{\sigma
      N,+})-f(\eta)] \Big\}
  \end{split}
\end{equation}
where $\eta^{x,y}$ is the configuration $\eta$ with a particle moved
from $x$ to $y$, for $x,y= -N, \dots, N$. For $\eta^{N,N+1}$ and
$\eta^{-N,-N-1}$ we have destroyed a particle in the correspondig
site, while $\eta^{x,+}$ means we have addeed a particle in the site
$x=N, -N$.
 We assume that the rate function $g:\bN \to \bR_+$ such that $g(0) = 0$,
$g(k)>0$ for $k>0$, and $\sup_{k} |g(k+1) - g(k)| < +\infty $.
As before we assume that $\lambda_\pm(t)$ are differentiable 
functions of time with bounded derivative.

{For any $\la>0$ consider the measure $ \mu_\lambda$ on the non negative integer
\begin{equation*}
  \mu_\lambda(k) = \frac{\lambda^k}{g(k)!} \frac 1{Z(\lambda)}, \qquad g(k)! =
  g(1)\dots g(k), \ g(0)! = 1.  
\end{equation*}
where $Z(\lambda)$ is the normalization constant.

Let $\bar\lambda(r,t)$, $r\in[-1,1]$ be  the linear 
interpolation of $\la_\pm(t)$ defined as in \eqref{eq:2}. 

Consider as reference measure the inhomogeneous product
 \begin{equation}
  \mu^N_t(\eta)=\prod_{x=-N}^N\, \mu_{\bar\lambda(\frac xN,t)}({\eta(x)}) 
 \end{equation}
 Observe that  for all $N$,  $t$ and all local function $\varphi$:
\begin{equation}
\label{4.4a}
  \sum_\eta   L^{\text{0r}}_{N,t} \varphi (\eta) \mu^N_t(\eta) \ =\ 0.
\end{equation}

For constant $\lambda$, we denote by $\mu_\lambda$ the corresponding
homogeneoous product measure. 

Assume that the initial configuration $\eta_0$ is randomly distributed by 
a probability measure with density $f_{N,0}$ with respect to $\mu^N_0(\eta)$ and such that the relative entropy $\sum_\eta f_{N,0}(\eta) \log f_{N,0} (\eta)\mu^N_0(\eta) \le CN$ for some constant $C>0$.  

\begin{thm}
\label{lezeror}

  For any local function $\varphi$, denoting $\hat\varphi(\lambda) =
<\varphi>_\lambda$, the average of $\varphi$ with respect to $ \mu_\lambda$:
\begin{equation}
  \label{eq:66}
  \lim_{N\to\infty} \mathbb E \left(\int_0^t ds \left|\frac 1N
      \sum_x G(\frac xN) 
     \theta_x\varphi(\eta_{s}) - \int_0^1 G(y)
       \hat\varphi(\bar\lambda(y,s)) dy \right|\right) = 0.
\end{equation}
where $G$ is a measurable bounded test function and $\theta_i$ is
the space shift by $i$, well defined for $N$ large enough. 
\end{thm}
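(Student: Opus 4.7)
The plan is to follow the entropy method of Section \ref{sec:entropy-method} with the exclusion-specific ingredients replaced by their zero range counterparts, using the inhomogeneous reference measure $\mu^N_t$ introduced in the statement. Let $f_{N,t}$ denote the density of the law of $\eta_t$ with respect to $\mu^N_t$. The first step is the zero range analog of Proposition \ref{prop3.2}: there is a constant $C$ such that for all $t$
\begin{equation*}
\int_0^t \left(\mathfrak{D}^{\text{0r}}_{\text{ex},s}(f_{N,s}) + \mathfrak{D}^{\text{0r}}_{+,s}(f_{N,s}) + \mathfrak{D}^{\text{0r}}_{-,s}(f_{N,s})\right) ds \le \frac{C t}{N},
\end{equation*}
where the bulk Dirichlet form weights the jumps $x\to x\pm 1$ by $g(\eta(x))$, and the two boundary Dirichlet forms weight the creation/annihilation at $\pm N$ by $\lambda_\pm(s)$. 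The derivation is the standard computation: differentiate $H_N(t) = \sum_\eta f_{N,t}(\eta)\log f_{N,t}(\eta) \mu^N_t(\eta)$, apply Yau's inequality $a\log(b/a) \le 2\sqrt{a}(\sqrt b-\sqrt a)$ to the generator term, and integrate by parts against $\mu^N_t$, using \eqref{4.4a}.

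The non-stationarity of $\mu^N_t$ for $L^{\text{0r}}_{N,t}$ produces a cross term $\tilde B_N(t)$ analogous to that appearing in the proof of Proposition \ref{prop3.2}: the ratio $\mu^N_t(\eta^{x,x+1})/\mu^N_t(\eta)$ differs from $1$ by $\tfrac{1}{N} B(x/N,t) + O(N^{-2})$, with $B$ a bounded smooth function of $\bar\lambda$ and $\bar\lambda'$. The remainder is bilinear in the discrete gradient of $\sqrt{f_{N,t}}$ and in the unbounded factor $g(\eta(x))$, and the inequality $|ab|\le \tfrac{a^2}{2\eps} + \tfrac{\eps b^2}{2}$ with $\eps\sim N$ absorbs half of the bulk Dirichlet form at the cost of a term of order $N^{-1}\sum_x B(x/N,t)^2\, \bE_{f_{N,t}\mu^N_t}[g(\eta(x))^2]$. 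The main obstacle, absent in the exclusion setting, is bounding this expectation uniformly in $N$, $t$ and $x$. Using $g(k)\le Ck$ from $\sup_k|g(k+1)-g(k)|<\infty$, this reduces to a uniform second moment bound on $\eta(x)$, which follows from the standard entropy inequality provided $\bar\lambda(y,t)$ stays in a compact subset of the parameters with finite exponential moment of $\eta(0)$, a condition guaranteed by the hypothesis on $\lambda_\pm$ and the positivity of the rates.

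With the Dirichlet bound in hand, I would run the one-block and two-block estimates for zero range, as in Chapter 5 of \cite{KLbook}, adapted to the time-inhomogeneous reference measure. These identify the limit of the microscopic marginal of $f_{N,t}\mu^N_t$ around $[Ny]$ as the grand canonical measure $\mu_{\lambda(y,t)}$ for some function $\lambda$, with boundary values $\lambda(\pm 1,t) = \lambda_\pm(t)$ forced by the boundary Dirichlet forms. Since the dynamics is gradient, $L^{\text{0r}}_{N,t} \eta(x) = \tfrac{N^{2+\alpha}}{2}\Delta_N g(\eta(x))$ in the bulk; writing the martingale decomposition for $\xi_{N,s}(G) = \tfrac{1}{2N+1}\sum_x G(x/N)\eta_s(x)$ with $G\in C^\infty_c(-1,1)$ and dividing by $N^\alpha$, the one-block replacement $g(\eta(x))\rightsquigarrow \lambda(x/N,s)$ yields in the limit $\int_0^t\int G''(y)\lambda(y,s)\, dy\, ds = 0$ for every such $G$. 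Hence $\lambda(y,s)$ is affine in $y$ for each $s$, and the boundary values force $\lambda(y,s) = \bar\lambda(y,s)$. Combined with the one-block replacement applied to $\varphi$, giving $\tfrac{1}{2k+1}\sum_{|x-Ny|\le k}\theta_x\varphi(\eta_s) \to \hat\varphi(\bar\lambda(y,s))$, this yields \eqref{eq:66}.
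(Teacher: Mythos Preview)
Your overall strategy---entropy/Dirichlet bound, then one- and two-block estimates, then identification of the profile via the martingale decomposition---is the same route the paper indicates (it omits the proof, saying it is ``very similar'' to Theorem~\ref{leex}). However, you miss the simplification the paper explicitly singles out in \eqref{4.4a}: for zero range, the product measure $\mu^N_t$ with \emph{linearly} interpolated chemical potential is \emph{exactly} stationary for $L^{\text{0r}}_{N,t}$ at each fixed $t$ (this is the De~Masi--Ferrari observation \cite{DF}). Your sentence ``The non-stationarity of $\mu^N_t$ for $L^{\text{0r}}_{N,t}$ produces a cross term $\tilde B_N(t)$\dots'' is therefore based on a false premise: stationarity gives $\sum_\eta (L^{\text{0r}}_{N,t} f)\,\mu^N_t=0$, hence $\sum_\eta 2\sqrt{f}\,(L^{\text{0r}}_{N,t}\sqrt{f})\,\mu^N_t$ equals minus the Dirichlet form with no remainder, and the whole detour through second moments of $g(\eta(x))$ is unnecessary. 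This is precisely where zero range is \emph{simpler} than exclusion, not harder; the only time-inhomogeneity left in the entropy balance is the harmless $O(N)$ contribution from $\partial_t\mu^N_t$ via $\lambda_\pm'(t)$.

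There is also a technical error in the step you do not need: your expansion $\mu^N_t(\eta^{x,x+1})/\mu^N_t(\eta)=1+\tfrac1N B(x/N,t)+O(N^{-2})$ is false for zero range. A direct computation gives
\[
\frac{\mu^N_t(\eta^{x,x+1})}{\mu^N_t(\eta)}=\frac{\bar\lambda(\tfrac{x+1}N,t)}{\bar\lambda(\tfrac{x}N,t)}\cdot\frac{g(\eta(x))}{g(\eta(x+1)+1)},
\]
which carries order-one $\eta$-dependent factors; the exclusion-style expansion does not apply here. So had the cross term actually been present, your bound for it would not go through as written. Fortunately, by \eqref{4.4a}, it is identically zero.
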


We omit the  proof of Theorem \ref{lezeror}  being very similar to
the one of Theorem \ref{leex} for simple exclusion.

As observed in \cite{DF} in the case of $\la_\pm$ constants, \eqref{4.4a} straightforwardly yields \eqref{eq:66}.
}



\section{Damped anharmonic chain in temperature gradient}
\label{sec:damp-anharm-chain}

We consider a chain of $N$ coupled oscillators in one dimension. Each
particle has the same mass, equal to one. The configuration in the
phase space is described by $\eta = \{q_x,p_x, x = 1, \dots,N\}\in \bR^{2N}$.
The interaction between two particles $x$ and $x-1$ is described by
the potential energy $V(q_x-q_{x-1})$ of an anharmonic spring. The
chain is attached on the left to a fixed point, so we set $q_0(t) =0,
p_0(t)=0$.  
 We call $\{r_x=q_x-q_{x-1}, x=1,\dots, N\}$ the interparticle
 distance. We assume V to be a positive smooth function,
and that there exists a constant $C>0$ such that:
\begin{equation}\label{eq:V}
\begin{split}
\lim_{|r|\to \infty} \frac{V(r)}{|r|}=\infty, \quad
\lim_{|r|\to \infty} V''(r)\leq C<\infty
\end{split}
\end{equation}

Energy is defined by the following Hamiltonian:
\begin{equation}
\cH
:=\sum_{x=1}^N\left(\frac{p_x^2}{2}+V(r_x)\right) 
\end{equation}

The particle dynamics is subject to an interaction with an environment
given by Langevin heat bath at different temperatures $\beta_x^{-1}$. 
We choose $\beta_x$ as slowly varying on a macroscopic scale,
i.e. $\beta_x = \beta(x/N)$ for a given smooth strictly positive function
$\beta(y)$, $y\in [0,1]$.

The equations of motion are given by
\begin{equation}
\begin{cases}
dr_x(t)=&N^{2+\alpha}(p_x(t)-p_{x-1}(t))dt\\
dp_x(t)=&N^{2+\alpha}(V'(r_{x+1}(t))-V'(r_x(t)))\; dt \\
&-N^{2+\alpha} \gamma p_x(t) dt +
N^{1+\alpha/2}\sqrt{\frac{2\gamma}{\beta_x}}dw_x(t), \quad x=1,..,N-1\\
dp_N(t)= &N^{2+\alpha}(\bar{\tau}(t)-V'(r_N(t)))\;dt-N^{2+\alpha}\gamma
p_N(t)\; dt \\
&+N^{1+\alpha/2}\sqrt{\frac{2\gamma}{\beta_N}} dw_N(t) .
\end{cases}
\end{equation}
Here $\{w_x(t)\}_x$ are $N$-independent Wiener processes, $\gamma>0$ is
the coupling parameter with the Langevin thermostats.  The force
$\bar\tau(t)$ is assumed to be a smooth function of the
\emph{macroscopic} time $t$. 
 
The generator of the process is given by
\begin{equation}
\cL^{\bar{\tau}(t)}_n:= N^{2+\alpha}\left(\cA^{\bar{\tau}(t)}_N+ \gamma \cS_N\right),
\end{equation}
where $\cA^{\bar{\tau}}_N$ is the Liouville generator 
\begin{equation}
A_N^{\bar{\tau}}=\sum_{x=1}^N (p_x-p_{x-1})\partial_{r_N}
+\sum_{x=1}^{N-1}(V'(r_{x+1})-V'(r_x))\partial_{p_x}+(\bar{\tau}-V'(r_N))\partial_{p_N} 
\end{equation}
while $\cS_N$ is the operator 
\begin{equation}
\cS_N=\sum_{x=1}^N\left(\beta^{-1}_x\partial_{p_x}^2-p_x\partial_{p_x}\right)
\end{equation}

An equivalent dynamics is given by a different modelling of the heat
bath: particle $x$ undergoes stochastic elastic collisions with
\emph{particles of the environment} at temperature $\beta^{-1}_x$,
i.e. at independent 
exponentially distributed times of intensity $\gamma N^{2+\alpha}$ particle $x$
changes its velocity to a new random velocity normally distributed with
variance $\beta^{-1}_x$. The evolution equations are given by: 
\begin{equation}\label{eq:intcoll}
\begin{cases}
dr_x(t)=&N^{2+\alpha}(p_x(t)-p_{x-1}(t))dt\\
dp_x(t)=&N^{2+\alpha} (V'(r_{x+1}(t))-V'(r_x(t)))dt \\
& + \left(\tilde
  p_{x,{\mathcal N}_x(\gamma N^{2+\alpha} t)} - p_x(t^-)\right) d{\mathcal N}_x(\gamma
 N^{2+\alpha} t), \qquad x=1,..,N-1\\
dp_N(t)=&N^{2+\alpha}(\bar{\tau}(t)-V'(r_N))dt \\
 & + \left(\tilde
  p_{N,{\mathcal N}_N(\gamma N^{2+\alpha} t)} - p_N(t^-)\right) 
d{\mathcal N}_N(\gamma N^{2+\alpha} t) 
\end{cases}
\end{equation}
where $\tilde p_{x,k}$ are independent gaussian variables on mean zero
and variance $\beta^{-1}_x$, and $\{{\mathcal N}_x(t), x=1,\dots,N\}$ are
independent Poisson processes of intensity 1.

For $\bar{\tau}(t)=\tau$ constant, and $\beta_x=\beta$ homogeneous,
the system has a unique invariant 
measure given by a product of invariant Gibbs
measures $\mu_{\tau,\beta}^N$:
\begin{equation}\label{eq:gibbscan}
d\mu_{\tau,\beta}^N =\prod_{x=1}^N e^{-\beta(\mathcal{E}_x-\tau
  r_x)-\mathcal{G}_{\tau,\beta}} dr_xdp_x 
\end{equation}
where $\cE_x$ is the energy of the particle $x$:
\begin{equation}
\mathcal{E}_x = \frac{p_x^2}{2}+V(r_x).
\end{equation}

The 
function $\mathcal{G}_{\tau,\beta}$ is the Gibbs potential defined as:
\begin{equation}
\mathcal{G}_{\tau,\beta}=\log{\left[\sqrt{2\pi\beta^{-1}}\int
    e^{-\beta(V(r)-\tau r)} dr \right]}.
\end{equation}

The free energy of the equilibrium state $(r,\beta)$ is given by the
Legendre transform of $-\beta^{-1}\cG_{\tau,\beta}$:
\begin{equation}
\cF_{r,\beta}=\sup_{\tau}\{ \tau r + \beta^{-1}\cG_{\tau,\beta}\}
\end{equation}
The corresponding convex conjugate variables are the lenght
\begin{equation}\label{grandpot}
\mathfrak{r}(\tau,\beta)=\beta^{-1}\partial_\tau \cG_{\tau,\beta}
\end{equation} 
and the tension 
\begin{equation}
\bm{\tau}(r,\beta)=\partial_r \cF_{r,\beta}.
\end{equation} 
Observe that 
\begin{equation}
\mathbb{E}_{\mu_{\tau,\beta}^N}[r]=\mathfrak{r},
\qquad
\mathbb{E}_{\mu_{\tau,\beta}^N}[V'(r)]=\bm{\tau}.
\end{equation}
\\
Let $\nu^N_{\beta_\cdot}$ the inhomogeneous Gibbs measure
\begin{equation}
d\nu^N_{\beta_\cdot}= \prod_{x=1}^N \frac{e^{-\beta_x\cE_x}}{{Z_{\beta_x}}}
\end{equation}
Observe that this is \textbf{not} the stationary measure for this
dynamics for $\bar\tau = 0$.

Also we will use the thermodynamic entropy defined as
\begin{equation}
  \label{eq:tentropy}
  S(u,r) = \inf_{\beta>0} \left\{ - \beta u - \beta \mathcal F(r,\beta) \right\}.
\end{equation}

Let $f^N_t$ the density, with respect to $\nu^N_{\beta_\cdot}$, of the
probability distribution of the system at time t, i.e. the solution of
\begin{equation}
  \label{eq:fk}
  \partial_t f^N_t = \mathcal L_N^{\bar\tau(t),*} f^N_t ,
\end{equation}
where $\mathcal L_N^{\bar\tau(t),*}$ is the adjoint of 
$\mathcal L_N^{\bar\tau(t)}$ with respect to $\nu^N_{\beta_\cdot}$,
i.e. explicitely 
\begin{equation}
  \begin{split}
    (\cL_N^{\bar{\tau}(t)})^*= - N^{2+\alpha} \cA_N^{\tau(t)} -
    N^{2+\alpha}\sum_{x=1}^{N-1} (\beta_{x+1} - \beta_x)
    p_xV'(r_{x+1})\\ + N^{2+\alpha}\beta(1) p_x\bar{\tau} (t)
    + N^{2+\alpha}\gamma \cS_N.
  \end{split}
\label{eq:adjointop}
\end{equation}

Define the relative entropy of $f^N_t d\nu^N_{\beta_\cdot}$ with respect
to $d\nu^N_{\beta_\cdot}$:
\begin{equation}
H_N (t) = \int f^N_t\log{f_t^N} d\nu^N_{\beta_\cdot}.
\end{equation}
We assume that the initial density $f^N_0$ satisfy the bound
\begin{equation}
  \label{eq:31}
  H_N (0) \le CN.
\end{equation}
We also need some regularity of  $f^N_0$: define the hypercoercive
Fisher information functional:
\begin{equation}
  \label{eq:hfi}
  \begin{split}
    I_N(t) &= \sum_{x=1}^{N-1} \beta^{-1}_x \int
    \frac{\left( \partial_{p_x} f^N_t
        + \partial_{q_x} f^N_t\right)^2}{f^N_t} 
   d\nu_{\beta\cdot} 
  \end{split}
\end{equation}
where $\partial_{q_x} = \partial_{r_{x}} - \partial_{r_{x+1}},
x=1,\dots,N-1$, and $\nu_{\beta\cdot}:=\nu^N_{\beta\cdot}$. 
We assume that
\begin{equation}
  \label{eq:32}
  I_N(0) \le K_N 
\end{equation}
with $K_N$ growing less than exponential in $N$.

Consider a local function $\varphi(\eta)$ such that 
\begin{equation}
|\varphi(\eta) |\le C \sum_{x\in\Lambda_\varphi} (p_x^2 +
V(r_x))^\delta, \qquad \delta < 1 \label{eq:bound}
\end{equation}
where $\Lambda_\phi$ is the local support of $\varphi$.
Let $\theta_i \varphi$ be the shifted function (well defined for large
enough $N$).  Denoting 
$\hat\varphi(\tau,\beta) =<\varphi>_{\tau,\beta}$ the expectation with
respect to $d\mu^N_{\tau\beta}$ defined by \eqref{eq:gibbscan} 

\begin{prop}\label{leoscill}
\begin{equation}
  \label{eq:666}
  \lim_{N\to\infty} \mathbb E^N \left(\int_0^t ds \left|\frac 1N
      \sum_x G(\frac xN) 
     \theta_x\varphi(\eta_{s}) - \int_0^1 G(y)
       \hat\varphi(\bar\tau(s),\beta(y)) dy \right|\right) = 0.
\end{equation}
\end{prop}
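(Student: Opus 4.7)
The plan is to implement Yau's relative entropy method with a time-dependent local Gibbs reference measure adapted to the quasi-static profile, exploiting the extra factor $N^\alpha$ in the time scaling to absorb the non-stationarity errors. First, I introduce the moving inhomogeneous product Gibbs measure
\begin{equation*}
d\tilde\mu^N_t(\eta) = \prod_{x=1}^N \exp\left\{-\beta_x(\mathcal{E}_x - \bar\tau(t) r_x) - \mathcal{G}_{\bar\tau(t),\beta_x}\right\} dr_x\, dp_x,
\end{equation*}
which is the natural quasi-static local equilibrium candidate: it carries the local temperature profile $\beta(\cdot)$ and the global tension $\bar\tau(t)$ imposed at the right boundary. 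Denote by $g^N_t$ the density of the law of $\eta_t$ with respect to $\tilde\mu^N_t$ and set $\tilde H_N(t) = \int g^N_t \log g^N_t\, d\tilde\mu^N_t$.

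The second step is to differentiate $\tilde H_N(t)$. A direct computation yields
\begin{equation*}
\frac{d}{dt}\tilde H_N(t) \le \int g^N_t\bigl(\mathcal{L}^{\bar\tau(t)}_N \log\tilde\mu^N_t - \partial_t \log \tilde\mu^N_t\bigr) d\tilde\mu^N_t - N^{2+\alpha}\gamma\, \mathfrak{D}^{\mathrm{Lan}}_N(g^N_t),
\end{equation*}
where $\mathfrak{D}^{\mathrm{Lan}}_N$ is the Langevin Dirichlet form relative to $\tilde\mu^N_t$. The boundary forcing $p_N\bar\tau(t)$ in $\mathcal A_N^{\bar\tau(t)}\log\tilde\mu^N_t$ is cancelled exactly by the $\beta_N\bar\tau(t)r_N$ contribution from the tension in $\tilde\mu^N_t$; by telescoping of $V'(r_{x+1})-V'(r_x)$ the bulk Hamiltonian part collapses to a sum
$N^{2+\alpha}\sum_x (\beta_{x+1}-\beta_x)\, p_x V'(r_{x+1})$ with $\beta_{x+1}-\beta_x = N^{-1}\beta'(x/N)+O(N^{-2})$, while the explicit time-derivative term gives $-\dot{\bar\tau}(t)\sum_x \beta_x (r_x - \langle r_x\rangle_{\tilde\mu^N_t})$, hence an $O(N)$ contribution under the entropy inequality.

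Step three is the quantitative estimate: by the entropy inequality and standard exponential Chebyshev bounds on product measures, each bulk term is controlled by
$\epsilon N^{2+\alpha}\mathfrak{D}^{\mathrm{Lan}}_N(g^N_t) + \epsilon^{-1}N^\alpha \tilde H_N(t) + O(N)$; choosing $\epsilon$ small enough to absorb the Dirichlet form leaves the Gronwall-type bound $\tilde H_N(t)\le e^{Ct}\tilde H_N(0)+CNt\le CN$ thanks to \eqref{eq:31}. The quasi-static result \eqref{eq:666} is then obtained by a standard one- and two-block replacement: from $\tilde H_N(t)=O(N)$ and the entropy inequality applied to the cylinder observable, one replaces $\theta_x\varphi(\eta_s)$ by its micro-canonical conditional expectation over a mesoscopic block, and equivalence of ensembles under the product $\tilde\mu^N_s$ identifies this expectation with $\hat\varphi(\bar\tau(s),\beta(x/N))$. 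The bound \eqref{eq:bound} on $\varphi$ ensures the required uniform integrability under $\tilde\mu^N_s$.

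The main obstacle is step three, specifically making the absorption of the bulk mismatch term $N^{2+\alpha}\sum_x(\beta_{x+1}-\beta_x)p_x V'(r_{x+1})$ rigorous: because the Langevin noise acts only on momenta, the Dirichlet form $\mathfrak{D}^{\mathrm{Lan}}_N$ controls $\partial_{p_x}\sqrt{g^N_t}$ but not position gradients. To transfer dissipation from momenta to positions one must invoke a hypocoercivity argument in the spirit of Villani/Olla--Sasada, which is exactly why the hypocoercive Fisher information $I_N(t)$ of \eqref{eq:hfi} is introduced and assumed bounded at $t=0$: one needs to show that $I_N(t)$ remains subexponential in $N$ under the time-dependent boundary-driven dynamics, using the extra $N^\alpha$ factor to dominate the commutator remainders $[\partial_{p_x}+\partial_{q_x},\mathcal L^{\bar\tau(t)}_N]$. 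Handling these commutators in the presence of the inhomogeneous temperature profile, the moving tension $\bar\tau(t)$, and the boundary term at $x=N$ is the technical heart of the proof; once this is done, the relative entropy estimate closes and the one-block/two-blocks machinery finishes the argument.
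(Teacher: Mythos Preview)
Your proposal correctly singles out hypocoercivity as the crux of the argument and rightly ties it to the functional $I_N$ introduced in \eqref{eq:hfi}; this is indeed why the paper assumes \eqref{eq:32}. But there is a genuine gap in step~3, and it propagates to the rest of the scheme.

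First, the Gronwall step does not close. With your own bound the entropy inequality reads
\[
\frac{d}{dt}\tilde H_N(t)\ \le\ -(\gamma-\epsilon)\,N^{2+\alpha}\,\mathfrak D^{\mathrm{Lan}}_N(g^N_t)\ +\ \epsilon^{-1}N^{\alpha}\,\tilde H_N(t)\ +\ O(N),
\]
and after absorbing the Dirichlet form this yields $\tilde H_N(t)\le e^{\,C N^{\alpha} t}\bigl(\tilde H_N(0)+O(N)t\bigr)$, not $e^{Ct}\tilde H_N(0)$. The extra $N^{\alpha}$ in the linear coefficient is exactly the quasi-static speed-up and cannot be wished away. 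Second, even if one had $\tilde H_N(t)=O(N)$, that by itself does not yield the one-block estimate: the GPV replacement lemma needs smallness of the \emph{time-integrated Dirichlet form}, not just an $O(N)$ entropy bound, to force the block marginals onto the manifold of canonical states. By absorbing $\mathfrak D^{\mathrm{Lan}}_N$ into the inequality you have thrown away precisely the information you need later.

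The paper's approach (only sketched here, with details deferred to Section~\ref{sec:entropy-method} and to \cite{LO2015}) runs the entropy computation the other way around. One works with the fixed reference $\nu^N_{\beta_\cdot}$, uses $H_N(t)\ge 0$ and $H_N(0)\le CN$ as inputs rather than conclusions, and shows that the error terms in the entropy production (the temperature-gradient current $N^{2+\alpha}\sum_x(\beta_{x+1}-\beta_x)p_xV'(r_{x+1})$, the boundary tension, and the time derivative of the reference) are all $O(N^{1+\alpha})$ after an integration by parts in $p_x$ and a Cauchy--Schwarz split against the Langevin Dirichlet form; the hypocoercivity machinery of \cite{LO2015} is what controls the position part. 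Integrating in time then \emph{extracts} the bound $\int_0^t \mathfrak D_N(f^N_s)\,ds \le Ct/N$, exactly as in Proposition~\ref{prop3.2}. This small Dirichlet form drives the one-block/two-block replacement, and the quasi-static profile is identified not through the reference measure but via the weak equation \eqref{eq:oscw}, obtained by dividing the evolution of the empirical stretch by $N^{\alpha}$. Your tension-adapted reference $\tilde\mu^N_t$ is a reasonable variant in the spirit of Yau, but to conclude directly from it you would need $\tilde H_N(t)=o(N)$, which your argument does not deliver.
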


The proof uses similar ideas as in the previous cases, plus results
contained in \cite{LO2015}. In particular from same argument used in
\cite{LO2015} follows that the empirical distribution for the $r$'s
converge to the solution of 
\begin{equation}
  \label{eq:oscw}
  \int_0^t ds \left[\int_{0}^1 \partial_y^2 G(y,s)
    \bm{\tau}(r(y,s),\beta(y)) dy - \partial_y G(1,s) \bar\tau(s)
  \right] =0
\end{equation}
for any smooth function $G(y,t)$ on $[0,1]$ such that $G(1)=0$ and $G'(0)=0$.

\subsection{Some ``thermodynamic'' consequences }
\label{sec:some-therm-cons}

We illustrate here how the above limit realizes the so-called
\emph{quasi-static} isothermal trasformation of the
thermodynamics. When a temperature gradient is present, 'isothermal'
should be intended that the temperature gradient does not change in
time. In this case these are quasistatic transformations between
non-equilibrium stationary states.  

When performing the usual diffusive scaling, as done in
\cite{olla2014} for constant temperatures and in \cite{LO2015} for the
temperature gradient case, the results below are obtained in a two
step limit, first the hydrodunamic diffusive limit, then a
quasi-static limit, see details in \cite{olla2014,LO2015}

\subsubsection{Excess Heat}
\label{sec:excess-heat}

The (normalized) total internal energy of the system is defined
by 
\begin{equation}
  \label{eq:14}
  U_N := \frac 1N \sum_{x=1}^N\left(\frac{p_x^2}{2}+V(r_x)\right) 
\end{equation}
then internal energy evolves as:
\begin{equation*}
   U_N(t) -  U_N(0) = \mathcal W_N(t) + Q_N(t)
\end{equation*}
where 
\begin{equation*}
  \mathcal W_N(t) = N^{1+\alpha} \int_0^t \bar\tau(s) p_N(s) ds = \int_0^t
  \bar\tau(s) \frac{dq_N(s)}n 
\end{equation*}
is the (normalized) work done by the force $\bar\tau(s)$ up to time
$t$, 
while
\begin{equation}\label{eq:heat}
  \begin{split}
    Q_N(t) = &\gamma\; N^{1+\alpha} \sum_{x=1}^N \int_0^t ds
    \left(p_x^2(s) - \beta_x^{-1}\right) \\
   & + N^{\alpha/2} \sum_{x=1}^N
    \sqrt{2\gamma \beta^{-1}_x} \int_0^t p_x(s) dw_x(s).
  \end{split}
\end{equation}
is the total flux of energy between the system and the heat bath
(divided by $N$). 
As a consequence of theorem \autoref{leoscill} we have that 
\begin{equation*}
  \lim_{N\to\infty} \mathcal W_N(t) = \int_0^t \bar\tau(s) d\mathcal
  L(s) := \mathcal W(t)
\end{equation*}
  where $\mathcal L(t) = \int_0^1 r(y,t) dy$, the total macroscopic
  length at time $t$. While for the energy difference we expect that
  \begin{equation}\label{eq:unproven}
    \begin{split}
      \lim_{n\to\infty} \left(U_N(t) - U_N(0)\right) &= \int_0^1  
      \left[u({\bar \tau}(t),\beta(y)) - 
        u(\bar\tau(0), \beta(y))\right] dy
    \end{split}
  \end{equation}
where $u(\tau,\beta)$ is the average energy for
$\mu_{\beta,\tau}$, i.e.
\begin{equation*}
  u(\tau,\beta) = \int \mathcal E_1 d\mu^1_{\tau, \beta} = 
  \frac 1{2\beta} + \int V(r) e^{-\beta (V(r) - \tau r) -
    \mathcal{\tilde G}(\tau,\beta)} dr 
\end{equation*}
 with $\mathcal{\tilde G}(\tau,\beta) = \log \int e^{-\beta (V(r) -
 \tau r)} dr$. 
Unfortunately for lack of uniform integrability for the energy
distribution,  we do not have a rigorous proof of
\eqref{eq:unproven}, since energy correspond to a value $\delta = 2$
in \eqref{eq:bound}. 
Assuming that the local equilibrium established in \autoref{leoscill}
extends to quadratic growing functions, then we have obtained
\begin{equation}
  \label{eq:exheat}
  Q_N(t) \mathop{\longrightarrow}_{N\to\infty} Q(t)
\end{equation}
where $Q(t)$ is deterministic and satisfy the relation
\begin{equation}
  \label{eq:1st-pr}
  Q(t) = \int_0^1 \left[u({\bar \tau}(t),\beta(y)) - u(\bar\tau(0),
    \beta(y))\right] dy - \mathcal W(t).
\end{equation}
We call $Q(t)$ excess heat (or heat in the case of constant profile of
temperature), and \eqref{eq:1st-pr} express the first principle of
thermodynamics for quasistatic transformations.

Notice that from \eqref{eq:heat}, $Q_N(t)$ is the time integral of a
highly fluctuating random quantity. It is only after the particular
space-time scaling that the this quantity converges, and that the limit is a
\emph{deterministic} function of time. In this sense this result
is different from similar identifications of \emph{heat} as a \emph{stochastic}
flux of energy done in the so-called ``Stochastic Thermodynamics''
(cf. \cite{Seifert} for example). 
 On the other hand in
thermodynamics \emph{heat} is 
defined as the total flux of energy between the system and the thermal
bath, resulting from the complete transformation from one stationary
state to another, while it does not attempt to describe the
instantaneous flux of energy. But in a quasi-static transformations at
each time $t$ a new stationary state is reached.

\subsubsection{Free energy and Clausius identity}
\label{sec:free-energy}

Define
\begin{equation}
  \label{eq:16}
  \widetilde{\mathcal F}(t) = \int_0^1 \mathcal F(r(y,t), \beta(y)) dx
\end{equation}
A straightforward calculation gives
\begin{equation}
  \label{eq:clausius}
  \begin{split}
    \widetilde{\mathcal F}(t) -  \widetilde{\mathcal F}(0) = 
   \int_0^t ds \int_0^1 dy \bar\tau(s) \partial_s r(y,s) 
= \int_0^t \bar\tau(s) d\mathcal L(s) =  \mathcal W(t) 
  \end{split}
\end{equation}
  i.e. Clausius equality for the free energy.

Equivalently, by using the thermodynamic relation $\mathcal F = u - \beta^{-1}
S$ from \eqref{eq:tentropy}, we have 
\begin{equation}
  \label{eq:2nd}
  \int_0^1 dy\ \beta^{-1}(y) \left( S(r(y,t), u(y,t)) -  S(r(y,0),
    u(y,0))\right) = Q(t)
\end{equation}
In the case of constant temperature profile, this reduce to the
expected thermodynamic relation $\dot S = \beta \dot Q$ for
quasistatic isothermal thermodynamic trasformations.

\bibliographystyle{amsalpha}

\end{document}